\newtheorem{theorem}{Theorem}[section]
\newtheorem{lemma}[theorem]{Lemma}
\newtheorem{definition}[theorem]{Definition}
\newtheorem{proposition}[theorem]{Proposition}
\newtheorem{corollary}[theorem]{Corollary}
\newtheorem{conjecture}[theorem]{Conjecture}
\newtheorem{remark}{Remark}
\def \<{\langle}
\def \>{\rangle}
\def \a{\alpha }
\newcommand{\bea}{\begin{eqnarray}}
\newcommand{\eea}{\end{eqnarray}}
\newcommand{\be}{\begin {equation}}
\newcommand{\ee}{\end{equation}}
\newcommand{\Z}{\Bbb Z}
\newcommand{\W}{\mathcal W}
\newcommand{\Zp}{{\Bbb Z}_{>0} }
\newcommand{\N}{{\Bbb Z}_{\ge 0} }
\newcommand{\C}{\Bbb C}
\newcommand{\la}{\langle}
\newcommand{\ra}{\rangle}
\begin{document}

 \title  {An explicit realization of logarithmic modules for the vertex operator algebra $\mathcal{W}_{p,p'}$}
\author{Dra\v{z}en Adamovi\'c and Antun Milas}
\address{Department of Mathematics, University of Zagreb, Croatia}
\email{adamovic@math.hr}

\address{Department of Mathematics and Statistics,
University at Albany (SUNY), Albany, NY 12222}
\email{amilas@math.albany.edu}

\begin{abstract}
By extending the methods used in our earlier work, in this  paper, we present an explicit realization of logarithmic  $\mathcal{W}_{p,p'}$--modules that have $L(0)$ nilpotent rank three. This was achieved by combining  the techniques developed in \cite{AdM-2009} with the theory of local systems of vertex operators
\cite{LL}. In addition, we also construct a new type of extension of $\mathcal{W}_{p,p'}$, denoted by $\mathcal{V}$. Our results confirm several claims in the physics literature regarding the structure of projective covers of certain irreducible representations in the principal block.  This approach can be applied to other models defined via a pair screenings.
\end{abstract}

\maketitle

\section{Introduction}

Let $p,p'$ be coprime $\geq 2$ and $c_{p,p'}=1-\frac{6(p-p')^2}{pp'}$ the minimal central charge for the Virasoro algebra.
As shown in \cite{FGST}, \cite{FGST2}, there exists  an important extension of the rational vertex operator algebra  $L(c_{p,p'},0)$, usually denoted by
 $\mathcal{W}_{p,p'}$, whose representation theory shares many similarities with representations of certain finite dimensional quantum group $\goth{g}_{p,p'}$ at roots of unity. In the same series of paper, it was predicted that these vertex algebras are in fact $\mathcal{W}$-algebras with
finitely many irreps. From the vertex algebra point of view, many claims in \cite{FGST}, \cite{FGST2} are still conjectures, including the part on the classification of irreducible representations. Also, conjecturally, $\mathcal{W}_{p,p'}$ is expected to be a $C_2$-cofinite vertex algebra, combining into an exact sequence
$$0 \longrightarrow  \overline{\mathcal{W}}  \longrightarrow \mathcal{W}_{p,p'} \longrightarrow   L(c_{p,p'},0)   \longrightarrow 0,$$
where $\overline{\mathcal{W}}$ is the maximal simple ideal in $\mathcal{W}_{p,p'}$.
The vertex algebra $\mathcal{W}_{p,p'}$ has some common features with a better understood (simple) triplet vertex algebra $\mathcal{W}_p$ studied intensively in many works (see \cite{AdM-triplet}, \cite{NT} and references therein), but there are many differences. In addition to $\mathcal{W}_{p,p'}$
not being simple,  it is also expected that (cf. \cite{FGST}, \cite{W})

\begin{itemize}

\item[(i)] The category $\mathcal{W}_{p,p'}-{\rm Mod}$ contains  $p+p'-2$ "thin" blocks, $\frac{(p-1)(p'-1)}{2}$ "thick" blocks and two semi-simple blocks.

\item[(ii)] Characters (i.e. modified graded dimensions) of irreducible $\mathcal{W}_{p,p'}$-modules  involve modular forms of weight two \cite{FGST}.

\end{itemize}

In our recent  works \cite{AdM-IMRN,AdM-2011}, motivated primarily by \cite{FGST} and  \cite{FGST2}, we started to investigate the representation theory of
vertex algebra $\mathcal{W}_{p,p'}$. There our focus was mainly on $(p,2)$--minimal models, which enjoy some special properties absent for general
$p'$. In \cite{AdM-2011}, we showed that $\mathcal{W}_{p,2}$ is $C_2$--cofinite and irrational ( for every odd $p \geq 3$). We also classified all irreducible
$\mathcal{W}_{p,2}$-modules and described the structure of the corresponding Zhu algebras. As a consequence we give evidence that $\mathcal{W}_{p,2}$ contains logarithmic modules which have $L(0)$--nilpotent rank two or three, but not higher.  We expect that the vertex algebra $\mathcal{W}_{p,p'}$ shares similar properties, but this is still an open problem. Both \cite{AdM-IMRN} and \cite{AdM-2011} deal with explicit realizations of irreducible modules, but no logarithmic $\mathcal{W}_{p,p'}$-modules were constructed there. In fact, the only rigorous result for rank $3$ logarithmic modules comes from  \cite{AdM-2011a}, where we used Zhu's algebra to prove the existence of logarithmic module of rank $3$ but only for $\mathcal{W}_{3,2}$-algebra.

Motivated by a similar circle of ideas, in \cite{AdM-2009} (and shortly afterwords independently in \cite{NT}) we used vertex operator algebra theory
to give an explicit realization of some projective covers of irreducible modules for the triplet vertex algebra $\mathcal{W}_p$
which are logarithmic  of $L(0)$--nilpotent rank two (we stress that the paper \cite{AdM-2009} deals only with special projective covers - the rest was constructed in \cite{NT}). The key idea was to realize logarithmic modules (in non-semisimple blocks) on the direct sum of two irreducible modules for the lattice vertex algebra containing the triplet vertex algebra. The methods in \cite{AdM-2009}, \cite{NT} can be also used to handle other models defined via a single screening. Thin blocks in $\mathcal{W}_{p,p'}-{\rm Mod}$ are expected to have properties similar to those for the triplet vertex algebra $\mathcal{W}_p$, while thick blocks - including  the principal block - should involve more complicated indecomposable modules. From the point of view of construction of projective covers, thin blocks  presumably can be handled by the  methods developed in \cite{AdM-2009} and \cite{NT}.

In the present work, generalizing \cite{AdM-2009},  we shall present an explicit construction of some logarithmic $\mathcal{W}_{p,p'}$-modules in
certain thick blocks which have $L(0)$--nilpotent rank three.

 Let us explain the main results and concepts of our construction.
In Section \ref{sec-def-w} we recall the definition of the vertex algebra $\mathcal{W}_{p,p'}$ as  the intersection of kernels of two commuting screening operators. This realization is useful for construction of irreducible representations, but in order to construct logarithmic representations we need to embed $\mathcal{W}_{p,p'}$  into more complicated vertex algebra. In Section  \ref{extended-1}
 we present certain results on extended vertex algebras. We construct   the extended vertex algebra
$$\mathcal{W}_{p,p'} \hookrightarrow V(p,p'),$$
 and its natural module $MV(p,p')$. The vertex algebra $V(p,p')$ is realized as a direct sum of four irreducible modules for the lattice vertex algebra $V_L$ (cf. Section \ref{extended-1}). In Section \ref{extended-2}  we construct  a non-trivial homomorphism $\Phi: \mathcal{W}_{p,p'} \rightarrow \mathcal{V}$ where $\mathcal{V}$ is a vertex algebra realized as  local fields on $V(p,p')$. In Proposition \ref{structure-extend-1}  we describe the structure of the vertex algebra $\mathcal{V}$ as a $\mathcal{W}(p,p')$--module. It is interesting to notice that the vertex algebra  $\mathcal{V}$, as a $\mathcal{W}_{p,p'}$--module, is not finitely generated, and gives a new type of extension of $\mathcal{W}_{p,p'}$. In the case $p'=2$ we are able to describe the structure of $\mathcal{V}$.  By applying the construction from \cite{AdM-2009}, in Theorem \ref{const-log-new}  we get construction of  logarithmic $\mathcal{V}$--modules. Important consequence is:

 \begin{theorem} \label{main1}
The $\mathcal{W}_{p,p'}$--modules $\overline{V(p,p')}$ and $\overline{MV(p,p')}$ have $L(0)$--nilpotent rank $3$.
\end{theorem}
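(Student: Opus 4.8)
The plan is to obtain $\overline{V(p,p')}$ and $\overline{MV(p,p')}$ as $\mathcal{W}_{p,p'}$--modules by restricting, along the homomorphism $\Phi : \mathcal{W}_{p,p'} \to \mathcal{V}$ of Section \ref{extended-2}, the logarithmic $\mathcal{V}$--module structures constructed in Theorem \ref{const-log-new}, and then to read off the Jordan form of the Virasoro zero mode. Write $\omega$ for the conformal vector of $\mathcal{W}_{p,p'}$; after restriction the operator governing the grading is $L(0) = \Phi(\omega)_{(1)}$, acting on the space underlying $V(p,p')$ (respectively $MV(p,p')$). Since the logarithmic structure is built by the $\log z$--deformation of \cite{AdM-2009}, this operator is not semisimple, and the entire theorem amounts to computing the nilpotency index of its nilpotent part.

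I would first isolate that nilpotent part by writing $L(0) = L(0)_{ss} + N$, where $L(0)_{ss}$ is the diagonalizable piece inherited from the conformal grading of the underlying lattice modules and $N$ collects the off-diagonal contributions produced by the deformation. The structural input here is Proposition \ref{structure-extend-1}, which exhibits $\mathcal{V}$ as a $\mathcal{W}_{p,p'}$--module carrying a finite filtration whose successive quotients are built from the four lattice summands of $V(p,p')$; the operator $N$ strictly raises the filtration degree, so its nilpotency is controlled by the length of this filtration.

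For the upper bound $N^3 = 0$ I would argue that the deformation advances the filtration degree by exactly one at each application, so that $N^3$ annihilates every vector once the filtration has length three. The occurrence of length three --- rather than the length two seen for the single-screening triplet construction of \cite{AdM-2009} --- is precisely the effect of realizing $\mathcal{W}_{p,p'}$ through a \emph{pair} of commuting screenings: each screening direction contributes one rank-raising off-diagonal term to $N$, and their composition is what lifts the nilpotent rank from two to three.

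The lower bound $N^2 \neq 0$ is the step I expect to be the main obstacle. Here I would exhibit an explicit vector $v$ of lowest conformal weight in the bottom filtration layer and compute $N^2 v$ directly from the local-field formulas for $\Phi$ together with the zero modes of the two screenings. The delicate point is noncancellation: $N$ has two off-diagonal pieces, one for each screening, and one must verify that their product sends $v$ to a nonzero vector in the top layer rather than telescoping away. This rests on the nontriviality of $\Phi$ established in Section \ref{extended-2} --- which guarantees $N \neq 0$ --- combined with a careful bracket computation confirming that the two corrections reinforce rather than cancel, thereby witnessing a genuine size-three Jordan block for $L(0)$.
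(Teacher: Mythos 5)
Your overall strategy matches the paper's: restrict the $\Delta(\widetilde{H(z)},z_1)$--deformed $\mathcal{V}$--module structure along $\Phi$, split the deformed Virasoro zero mode into a semisimple part plus a nilpotent part $N$ that strictly raises the coset degree on the four lattice summands (so $N^3=0$ is immediate from the three-layer structure $V_L$, then $V_{L-\a/p}\oplus V_{L+\a/p'}$, then $V_{L+\a/p'-\a/p}$), and then certify $N^2\neq 0$ on an explicit vector. However, the proposal has a genuine gap precisely at the step you yourself flag as the main obstacle: the non-vanishing of $N^2$ is never actually established, and the witness you propose would fail. The paper computes $\overline{L(0)} = L(0) + Q + \widetilde{Q} + e^{\a/p'-\a/p}_{-1}$ from the relations $\widetilde{H(z)}_1\widetilde{L(z)}=H(z)$ and $\widetilde{H(z)}_n\widetilde{L(z)}=0$ for $n\neq 1$, so $N$ has \emph{three} off-diagonal pieces, not two; more importantly, the paper explicitly observes that the lowest (generalized) weight space $\overline{V(p,p')}_0$ is only two-dimensional and carries no length-three Jordan block --- indeed $N^2\mathbf{1}=0$ since $N\mathbf{1}=e^{\a/p'-\a/p}$ already lies in the top layer. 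The actual witness is $\a(-1)\mathbf{1}$ in conformal weight one, where a direct computation gives $(\overline{L(0)}-L(0))^2\,\a(-1)\mathbf{1} = -4\,\a(-1)e^{\a/p-\a/p'}\neq 0$; your proposed ``vector of lowest conformal weight in the bottom layer'' does not witness the rank.

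Two further points. First, your appeal to Proposition \ref{structure-extend-1} is a miscitation: that proposition describes the ideal $\mathcal{I}\subset\mathcal{V}$ as an infinite direct sum of cyclic $\mathcal{W}_{p,p'}$--modules and plays no role in bounding the nilpotency of $N$ on $V(p,p')$; the relevant ``filtration'' is simply the decomposition of $V(p,p')$ into four lattice cosets (and note $N$ does not raise the degree by exactly one --- the term $e^{\a/p'-\a/p}_{-1}$ raises it by two --- though ``at least one'' still yields $N^3=0$). Second, the case of $\overline{MV(p,p')}$ requires a genuinely different input that your uniform treatment omits: there is no weight-one trick there, and the paper instead invokes the Feigin--Fuchs structure of $M(1)\otimes e^{\a/2}$ to produce a subsingular vector $w$ with $Qw=e^{\a/2+\a/p'}$ and $(\overline{L(0)}-L(0))^2 w = 2\widetilde{Q}Qw\neq 0$. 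Without these two explicit non-vanishing computations the theorem is not proved.
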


When specialized to the case of $\mathcal{W}_{3,2}$-algebra, we found precise agreements with the structure of projective covers
$\mathcal{P}(1)$ and $\mathcal{P}(5)$ proposed in \cite{GRW2}. Hence

\begin{conjecture} \label{conj1} The logarithmic module $\overline{V(3,2)}$ (resp. $\overline{MV(3,2)}$) is a projective cover of  the irreducible module $\mathcal{W}(1)$ (resp.$ \mathcal{W}(5)$), where the notation $\mathcal{W}(i)$ was borrowed from \cite{AdM-IMRN}.
\end{conjecture}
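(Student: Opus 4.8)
The plan is to prove the two assertions of Conjecture \ref{conj1} by establishing, for each module, that it is (i) indecomposable with simple top isomorphic to the asserted irreducible, and (ii) projective in the module category of $\mathcal{W}_{3,2}$; a projective indecomposable module with top $\mathcal{W}(1)$ is then, by uniqueness of projective covers, precisely $\mathcal{P}(1)$. The starting point is that $\mathcal{W}_{3,2}$ is $C_2$--cofinite (proved in \cite{AdM-2011}), so that its category of grading--restricted generalized modules is a finite abelian category in which every object has finite length and every irreducible admits a projective cover. It therefore suffices to identify $\overline{V(3,2)}$ (resp. $\overline{MV(3,2)}$) with the abstractly guaranteed cover of $\mathcal{W}(1)$ (resp. $\mathcal{W}(5)$).

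First I would determine the Loewy structure. Using the explicit realization of $\overline{V(3,2)}$ inside the sum of lattice--module blocks comprising $V(3,2)$, I would compute its graded character and, via the classification of irreducible $\mathcal{W}_{3,2}$--modules and their characters from \cite{AdM-2011}, read off the Jordan--H\"older multiplicities. Theorem \ref{main1} already shows that $L(0)$ acts with nilpotent rank $3$, which forces Loewy length at least $3$ and is consistent with the diamond--shaped socle series of $\mathcal{P}(1)$ predicted in \cite{GRW2}. I would then show the module is cyclic, generated by its lowest--weight vector, and that the quotient by its radical is irreducible and isomorphic to $\mathcal{W}(1)$; concretely this means computing the action of the Zhu algebra $A(\mathcal{W}_{3,2})$, and of an appropriate higher Zhu algebra $A_n(\mathcal{W}_{3,2})$ where the top sits above the lowest graded piece, on the relevant component and matching it with the generator of $\mathcal{W}(1)$. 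The analogous computation with $\mathcal{W}(5)$ handles $\overline{MV(3,2)}$.

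The main obstacle is step (ii), projectivity. The most promising route is through higher Zhu algebras: one shows that a suitable graded component of $\overline{V(3,2)}$, regarded as a module over $A_n(\mathcal{W}_{3,2})$ for suitable $n$, is a projective $A_n$--module, and that $\overline{V(3,2)}$ coincides with the generalized Verma (induced) module built from it, so that its projectivity is inherited from projectivity over the finite--dimensional algebra $A_n(\mathcal{W}_{3,2})$. An alternative is to exploit contragredient duality: in these $C_2$--cofinite logarithmic categories projective and injective objects are expected to coincide, so it would suffice to verify that $\overline{V(3,2)}$ is self--dual with simple socle and that $\mathrm{Ext}^1(\overline{V(3,2)},\mathcal{W}(j))=0$ for every irreducible $\mathcal{W}(j)$. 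Matching the class $[\overline{V(3,2)}]$ with the predicted class $[\mathcal{P}(1)]$ in the Grothendieck group $K_0$ provides a useful consistency check, although by itself it is not sufficient.

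I expect the genuine difficulty to be that a rigorous proof of projectivity presupposes control that is not yet available for $\mathcal{W}_{3,2}$: namely a complete classification of irreducibles together with the full Ext--quiver of the principal block, or equivalently the rigid braided tensor structure on the module category (the latter not being established even conjecturally for general $\mathcal{W}_{p,p'}$). Since the existence of rank--three logarithmic modules for $\mathcal{W}_{3,2}$ has so far been secured only through Zhu--algebra methods in \cite{AdM-2011a}, and the full categorical structure remains open, I would be able to prove the matching of composition factors and of the top unconditionally, while the projectivity --- and hence the identification with $\mathcal{P}(1)$ and $\mathcal{P}(5)$ --- can at present only be asserted as a conjecture, in agreement with the physics predictions of \cite{GRW2}.
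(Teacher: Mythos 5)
The statement you were asked to prove is stated in the paper as a \emph{conjecture}, and the paper contains no proof of it. The authors supply only supporting evidence: Theorem \ref{main1} (rank-$3$ nilpotency of $L(0)$ on $\overline{V(3,2)}$ and $\overline{MV(3,2)}$), the assembly of the four embedding diagrams of the lattice modules $V_L$, $V_{L-\alpha/3}$, $V_{L+\alpha/2}$, $V_{L+\alpha/6}$ into the shape of the projective covers $\mathcal{P}(1)$ and $\mathcal{P}(5)$ predicted in \cite{GRW2}, the socle computation showing $\overline{MV(3,2)} \ncong \overline{V(3,2)}$, and the intertwining operator of Theorem \ref{main2-uvod}. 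Your proposal, to its credit, arrives at exactly the same endpoint: you isolate projectivity as the step that cannot be carried out, and your closing diagnosis coincides almost verbatim with the authors' own admission that \emph{``Even in the $c=0$ case we cannot prove projectivity of either of the modules because we lack information about other indecomposable modules in the block and the related ${\rm Ext}$-groups.''} So there is no divergence of substance between your assessment and the paper's: the statement is genuinely open, and the unconditional portions of your plan (Jordan--H\"older multiplicities via characters, identification of the top, Loewy length at least $3$ from nilpotent rank $3$) correspond to what the paper actually establishes or could establish with its explicit realization.

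Two caveats on your premises, neither of which changes your verdict. First, you take for granted that $C_2$-cofiniteness of $\mathcal{W}_{3,2}$ (valid by \cite{AdM-2011}, since $p=3$ is odd) already yields a finite abelian category with enough projectives and projective covers for every irreducible; this is expected but is itself part of the missing categorical control, and your proposed route through higher Zhu algebras $A_n(\mathcal{W}_{3,2})$ presupposes precisely the structural information (the algebras $A_n$ and the Ext-quiver of the principal block) whose absence the authors cite as the obstruction --- so it is a restatement of the difficulty rather than a way around it. Second, the claim that $\overline{V(3,2)}$ is ``generated by its lowest-weight vector'' is imprecise: by the paper's computation the lowest generalized $L(0)$-eigenspace $\overline{V(p,p')}_0$ is two-dimensional, while the top $\mathcal{W}(1)$ of the conjectural $\mathcal{P}(1)$ sits in conformal weight $1$, so a cyclic generator must be sought there, not at the minimal weight; in a logarithmic module the radical filtration is not weight-ordered, as the $\mathcal{W}(0)$ constituents in the second and fourth Loewy layers of the \cite{GRW2} diagram make clear.
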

Two  modules in Theorem \ref{main1} are distinguished in many aspects, and are expected to be intertwined through a simple current module.
To explain this, recall that  in \cite{AdM-IMRN} we found out that $\mathcal{W}_{p,2}$ has an important vertex superalgebra extension generated from
two primary vectors inside
\be \label{doublet}
M = \mbox{Ker}_{V_{L+\alpha /2} } Q \cap \mbox{Ker}_{ V_{ L + \alpha /2} } \widetilde{Q},
\ee
For general $p$ and $p'$, the space $M$ is a module for $\mathcal{W}_{p,p'}$ . Our next result is

\begin{theorem} \label{main2-uvod}
There is a nonzero intertwining operator of type
$$ { \overline{MV(p,p')} \choose  M \ \ \overline{V(p,p')}  } .$$
 \end{theorem}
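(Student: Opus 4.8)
The plan is to realize the desired intertwining operator as the restriction of a generalized (module) vertex operator already present in the lattice realization, suitably corrected by the logarithmic deformation that defines the bar-modules. All three modules $M$, $\overline{V(p,p')}$ and $\overline{MV(p,p')}$ are built from cosets $V_{L+\mu}$ of the lattice vertex algebra $V_L$: by \eqref{doublet} the module $M$ sits inside $V_{L+\alpha/2}$, while $\overline{V(p,p')}$ and $\overline{MV(p,p')}$ arise from $V(p,p')$ and $MV(p,p')$ through the rank-three logarithmic construction of \cite{AdM-2009} used in Theorem \ref{main1}. First I would check the charge-matching condition: the lattice coset attached to $\overline{MV(p,p')}$ equals the sum of the coset of $M$ (namely $L+\alpha/2$) and the coset of $\overline{V(p,p')}$. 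This is exactly what makes the standard generalized vertex operator $Y(w,z)$ with $w\in M$ map $V(p,p')$-type vectors into $MV(p,p')$-type vectors, and it is the reason $M$ behaves as a simple-current intertwiner between the two distinguished modules of Theorem \ref{main1}.

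Next I would incorporate the logarithmic structure. The bar-modules carry a non-semisimple $L(0)$ with a Jordan block of rank three, implemented (as in \cite{AdM-2009}) by the $\log z$/nilpotent deformation coming from the $\alpha$-direction. Accordingly I would define the candidate operator $\mathcal{Y}(w,z)$, for $w\in M$, by applying to the undeformed lattice vertex operator the same exponential-of-logarithm correction used to construct $\overline{V(p,p')}$ and $\overline{MV(p,p')}$, so that the deformed $L(0)$-actions on source and target are intertwined; the resulting operator involves powers of $\log z$ up to $(\log z)^2$. Equivalently, and more systematically, I would use the local-systems realization of \cite{LL}: the fields $Y(w,z)$, $w\in M$, are mutually local with the $\mathcal{W}_{p,p'}$-fields on $V(p,p')$, hence extend to mutually local fields on the logarithmic modules and thereby yield a logarithmic intertwining operator of the required type. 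With the operator in hand, the intertwining axioms reduce to two standard facts available in this framework: the $L(-1)$-derivative property $\tfrac{d}{dz}\mathcal{Y}(w,z)=\mathcal{Y}(L(-1)w,z)$ is inherited from the module vertex operators, and the logarithmic Jacobi identity follows from mutual locality of $\mathcal{Y}(w,z)$ with the $\mathcal{W}_{p,p'}$-action together with the associativity already established for $V(p,p')$ and $MV(p,p')$; the allowed $\log z$-powers are precisely controlled by the common nilpotent part of $L(0)$ determined in Theorem \ref{main1}.

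The main obstacle is the nonvanishing. After restricting to the $\mathcal{W}_{p,p'}$-stable logarithmic submodules and imposing the kernel conditions $\mathrm{Ker}\,Q\cap\mathrm{Ker}\,\widetilde{Q}$ that define $M$, one must verify that $\mathcal{Y}$ does not collapse. I would establish this by a leading-coefficient computation: take the lowest-weight doublet generator $w\in M$ and a suitable lowest-weight vector of $\overline{V(p,p')}$, compute the leading term of $\mathcal{Y}(w,z)$ applied to it in the explicit lattice model, and exhibit a nonzero image in $\overline{MV(p,p')}$. The delicate point is to ensure that this leading term survives both the screening constraints defining $M$ and the projection onto the correct Jordan block; here the matching of the rank-three nilpotent $L(0)$-structures on $\overline{V(p,p')}$ and $\overline{MV(p,p')}$, guaranteed by Theorem \ref{main1}, is what makes the surviving term land in the right component and be nonzero.
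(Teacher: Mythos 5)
Your overall skeleton matches the paper's: start from the (one-dimensional) space of lattice intertwining operators of type ${ MV(p,p') \choose MV(p,p') \ \ V(p,p') }$ (your ``charge matching''), restrict the first argument to $M$, view the resulting fields as members of a local system on $V(p,p')\oplus MV(p,p')$ containing $\mathcal{V}$, and then deform by $\Delta(\widetilde{H(z)},z_0)$. Two points, however, are genuinely off.

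First, the role of the kernel conditions defining $M$ is not what you say. They are not merely a constraint to be survived in the nonvanishing check; they are what makes the deformed operator an intertwining operator of the stated type at all. Since $\Delta(\widetilde{H(z)},z_0)=z_0^{\widetilde{H(z)}_0}\exp\bigl(\sum_{n\ge 1}\tfrac{\widetilde{H(z)}_n}{-n}z_0^{-n}\bigr)$, the only possible source of logarithms or operator-valued exponents is the zero mode $\widetilde{H(z)}_0$; because $Qw=\widetilde{Q}w=0$ for $w\in M$, this zero mode annihilates the $\mathcal{V}$-module $\mathcal{MV}$ generated by the fields attached to $M$, and consequently the deformed operator contains \emph{no} $\log z$ whatsoever. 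Your assertion that the operator ``involves powers of $\log z$ up to $(\log z)^2$'' is therefore incorrect: the paper deliberately defers genuinely logarithmic intertwiners with $\log^2 z$ to its final section and a future publication, and if such terms did appear here you would not obtain an intertwining operator of the type claimed in the theorem.

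Second, you omit the intermediate twist by $\Delta(e^{-\alpha/p},z)$. The $\mathcal{W}_{p,p'}$-module structure on $V(p,p')$ being deformed is the one transported through the embedding $\Phi$, i.e.\ through $\widetilde{Y}(v,z)=Y(\Delta(e^{-\alpha/p},z)v,z)$, not the restriction of the bare lattice operator $Y$. Hence the undeformed operator $Y(w,z)$, $w\in M$, does not intertwine the relevant actions; one must first pass to $\widetilde{\mathcal{Y}}(w,z)=\mathcal{Y}(\Delta(e^{-\alpha/p},z)w,z)$, extend these fields by zero on $MV(p,p')$, verify locality with the generating set $S$, and only then apply $\Delta(\widetilde{H(z)},z_0)$. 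Your leading-coefficient argument for nonvanishing is sound and is, if anything, more explicit than what the paper records.
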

Again, in the case of $(3,2)$-algebra the above theorem  can be made even more precise,
because  we have $\mathcal{W}(7) = M$.
\begin{corollary} Assuming Conjecture \ref{conj1}, the intertwining operator in Theorem \ref{main2-uvod} gives a non-trivial intertwining operator of type
$$ { \mathcal{P}(5) \choose  \mathcal{W}(7) \ \ \ \mathcal{P}(1) } .$$

\end{corollary}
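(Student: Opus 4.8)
The plan is to derive the statement by specialization and relabeling, since the entire analytic content is already contained in Theorem \ref{main2-uvod}. First I would set $(p,p')=(3,2)$ and apply Theorem \ref{main2-uvod} in this case, obtaining a nonzero intertwining operator $\mathcal{Y}$ of type ${\overline{MV(3,2)} \choose M\ \ \overline{V(3,2)}}$ for $\mathcal{W}_{3,2}$; explicitly $\mathcal{Y}(a,x)v \in \overline{MV(3,2)}\{x\}$ for $a \in M$ and $v \in \overline{V(3,2)}$. I would then record the dictionary supplied by the surrounding discussion: the identity $M = \mathcal{W}(7)$, valid precisely for the $(3,2)$-algebra, together with the $\mathcal{W}_{3,2}$-module isomorphisms $h \colon \mathcal{P}(1) \to \overline{V(3,2)}$ and $g \colon \overline{MV(3,2)} \to \mathcal{P}(5)$ furnished by Conjecture \ref{conj1}.

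Next I would transport $\mathcal{Y}$ along these isomorphisms. Setting
$$\widetilde{\mathcal{Y}}(a,x)\, u = g\big( \mathcal{Y}(a,x)\, h(u) \big), \qquad a \in \mathcal{W}(7)=M,\ u \in \mathcal{P}(1),$$
one checks directly from the defining axioms (the Jacobi identity, the $L(-1)$--derivative property, and the appropriate lower truncation of the formal series, see \cite{LL}) that $\widetilde{\mathcal{Y}}$ is again an intertwining operator, now of type ${\mathcal{P}(5) \choose \mathcal{W}(7)\ \ \mathcal{P}(1)}$. Each axiom is preserved because $g$ and $h$ are module homomorphisms that commute with the $\mathcal{W}_{3,2}$--action and with $L(-1)$, so composing with them on the two outer slots (and leaving the inner slot fixed via $\mathcal{W}(7)=M$) turns an intertwining operator into an intertwining operator.

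The only point requiring genuine care --- and hence the mild ``main obstacle'' --- is to see that the relabeling preserves nonvanishing, that is, that $\mathcal{Y}\neq 0$ forces $\widetilde{\mathcal{Y}}\neq 0$. This is immediate from the functoriality of spaces of intertwining operators under isomorphisms in each of the three slots: precomposition with $h$ and postcomposition with $g$ define a linear bijection between the space of operators of type ${\overline{MV(3,2)} \choose M\ \ \overline{V(3,2)}}$ and the space of type ${\mathcal{P}(5) \choose \mathcal{W}(7)\ \ \mathcal{P}(1)}$, whose inverse uses $h^{-1}$ and $g^{-1}$ and recovers $\mathcal{Y}$ from $\widetilde{\mathcal{Y}}$. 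Since $\mathcal{Y}\neq 0$ by Theorem \ref{main2-uvod}, its image $\widetilde{\mathcal{Y}}$ is nonzero as well, which is exactly the assertion of the corollary. Everything beyond this is bookkeeping dictated by Conjecture \ref{conj1} and the identification $\mathcal{W}(7)=M$.
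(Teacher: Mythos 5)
Your argument is correct and matches the paper's intent exactly: the paper offers no separate proof, treating the corollary as an immediate consequence of Theorem \ref{main2-uvod} once one specializes to $(p,p')=(3,2)$, invokes the identification $M=\mathcal{W}(7)$, and relabels the outer modules via the isomorphisms supplied by Conjecture \ref{conj1}. Your explicit check that transporting an intertwining operator along module isomorphisms preserves the axioms and nonvanishing is the same bookkeeping the authors leave implicit.
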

Intertwining operators of this type were predicted in \cite{GRW2}. We also refer to \cite{W}, \cite{PR}, \cite{R1}, \cite{R2} for more about fusion
rings of general $\mathcal{W}_{p,p'}$-modules. Result in those papers are in agreement with our Theorem \ref{main2-uvod}.


Although we are still far from constructing all indecomposable projective modules as predicted in (say) \cite{W}, we believe that the methods here together with the technique of "powers" of screenings \cite{NT}, \cite{AdM-2011} will be sufficient to produce the remaining indecomposable modules and related intertwining operators.


We finish here by noting that  almost all techniques in this paper can be used to study indecomposable modules for other vertex operator
(super)algebras defined through a pair of commuting screenings.

{\bf Acknowledgment:} We thank J. Rasmussen for a correspondence.

\section{The vertex algebra $\mathcal{W}_{p,p'}$}
\label{sec-def-w}

In this part we briefly recall the construction of  $\mathcal{W}_{p,p'}$ via screening operators.

Define the rank one even lattice  $$L= {\Z} \a, \quad \la \a , \a \ra = 2 p p',
$$
where $p, p' \in {\Zp}$, $p,p' \ge 2$ and $p$ and $p'$ are relatively prime.
Let $V_L$ be the associated (rank one) lattice vertex algebra \cite{LL}.
No central extension is needed for the lattice part so we have
$$V_L=M(1) \otimes \mathbb{C}[L].$$
The vertex algebra $V_L$ is a
 subalgebra of the generalized vertex algebra $V_{
 \widetilde{L}}$, where $$\widetilde{L} = {\Z} \frac{\a}{2p p'}$$
 is the dual lattice of $L$.
Let us denote the (generalized) vertex operator map in $V_{
 \widetilde{L}}$  by $Y_{ V_{ \widetilde{L} }}$, so that
  $$Y_{ V_{ \widetilde{L} }}(a,x)=\sum_{n \in \mathbb{Q}} a_n x^{-n-1}.$$
 Define
 $$ \omega = \frac{1}{4 p p'} \a(-1) ^2 {\bf 1}  + \frac{p-p'}{2 p p'
 }\a(-2){\bf 1},$$
and $Y(\omega,z)=\sum_{n \in \mathbb{Z}} L(n)z^{-n-2}$.
It can be easily showed that $\omega$ is a conformal vector with central charge
 $$c_{p,p'}=1-\frac{6(p-p')^2}{pp'}.$$  

 We invoke the relevant screening operators \cite{FGST}, \cite{AdM-IMRN}:
\vskip 2mm
 $$ Q = e^{{\a} /{p'}}_0 = \mbox{Res}_z Y_{V_{\widetilde L}}( e^{{\a}/{p'}}, z),
 \quad \widetilde{Q} = e^{-{\a} /{p}}_0 = \mbox{Res}_z Y_{V_{\widetilde L}}( e^{- {\a}/{p}}, z).
 $$
\vskip 2mm

The screening operators $Q$ and $\widetilde{Q}$ enable us to define
certain vertex subalgebras of $V_L$. Define as in \cite{AdM-IMRN}, \cite{FGST}
\begin{center} \shadowbox{$
\mathcal{W}_{p,p'} =\mbox{Ker}_{V_L} \widetilde{Q} \cap
\mbox{Ker}_{V_L} Q.
$
}
\end{center}

\begin{remark} We still do not have a proof that a strongly generating set for $ \mathcal{W}_{p,p'}$ consists of the conformal vector $\omega$ and three primary fields of conformal weight $(2p-1) (2 p' -1)$ ( there is conjecture in \cite{FGST} about this structure). For $p'=2$ a strong generating set was found in \cite{AdM-IMRN}.
\end{remark}

Several $\mathcal{W}_{p,p'}$-modules in this paper will be {\em logarithmic}, that is they are not diagonalizable
with respect to the $L(0)$ Virasoro generator \cite{AdM-2007}, \cite{HLZ}. We also say that a $\mathcal{W}_{p,p'}$-module $M$ is of $L(0)$-nilpotent rank
$k$ if $L(0)$ admits a Jordan block of size $k$, but not higher than $k$. Equivalently, $$(L(0)-L_{ss}(0))^k=0, \quad \mbox{and}  \quad (L(0)-L_{ss}(0))^{k-1} \neq 0$$ on $M$,
where $L_{ss}$ is the semisimple part of $L(0)$.

Motivated by (conjectural) formulas of characters of irreducible $\mathcal{W}_{p,p'}$-modules \cite{FGST} we are naturally led to
\begin{conjecture} Every f.g. $\mathcal{W}_{p,p'}$-module is of $L(0)$-nilpotent rank at most $3$.
\end{conjecture}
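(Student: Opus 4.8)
The plan is to reduce the bound on the $L(0)$-nilpotent rank to a statement about the zero mode $o(\omega)=L(0)$ acting on finitely many lowest-weight spaces, and then to control its nilpotent part through Zhu-type associative algebras. First I would argue that a finitely generated $\mathcal{W}_{p,p'}$-module $M$ has finite length and that the category of such modules is Krull--Schmidt; granting the (conjectural) $C_2$-cofiniteness this should follow from the finite-dimensionality of the Zhu algebra $A(\mathcal{W}_{p,p'})$ together with its higher analogues $A_n(\mathcal{W}_{p,p'})$. Since the $L(0)$-nilpotent rank of a finite direct sum is the maximum of the ranks of its summands, it then suffices to treat indecomposable $M$, and in fact to bound the rank on the projective covers appearing in each block.

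Next I would localize the nilpotency weight by weight. The action of $L(0)$ on the degree-$n$ homogeneous piece of a generalized lowest-weight module is governed by the image of $\omega$ in $A_n(\mathcal{W}_{p,p'})$; on the top level ($n=0$) this is exactly the image of $[\omega]$ in $A(\mathcal{W}_{p,p'})$, and the Jordan-block size there equals the nilpotency index of $N:=[\omega]-h\cdot 1$ on the corresponding finite-dimensional module. The core claim thus becomes: for every $n\ge 0$ the nilpotent part of the image of $[\omega]$ in $A_n(\mathcal{W}_{p,p'})$ acts on any finite-dimensional module with Jordan blocks of size at most $3$, the bound $N^2\neq 0$ being attained only in the thick blocks while the semisimple and thin blocks give $N=0$ and $N^2=0$. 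Because a size-$4$ Jordan block of $L(0)$ at some weight would force such a block of $[\omega]$ on some $A_n$, uniformity in $n$ yields the global rank bound.

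A second, more hands-on route is furnished by the construction of this very paper. The logarithmic modules are realized inside sums of irreducible $V_L$-modules, where the nilpotent part of $L(0)$ originates from the $\log z$ terms of the local (intertwining) fields; there the nilpotent operator is a single ``logarithmic'' endomorphism whose square is its highest nonzero power, giving rank exactly $3$. One would then try to show that every indecomposable module in a thick block is a subquotient of such an explicitly realized module, so that the same bound is inherited. This matches the expected structure of the projective covers of $\goth{g}_{p,p'}$, in which at most three composition factors of a common conformal weight can be stacked, and is consistent with item (ii): characters assembled from weight-two (quasi)modular forms carry at most a quadratic dependence on $\log q$, the expected shadow of a size-$3$ Jordan block.

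The main obstacle is that each route rests on inputs that are themselves still open for general $(p,p')$: the $C_2$-cofiniteness, the classification of irreducibles, and---most concretely---an explicit, provably complete set of strong generators and relations for $\mathcal{W}_{p,p'}$ (cf. the Remark). Without the last of these one cannot pin down $A(\mathcal{W}_{p,p'})$, its bimodules, and the $A_n(\mathcal{W}_{p,p'})$ rigorously, and hence cannot establish the uniform estimate $N^3=0$ in general; the present construction secures only the lower bound, that rank $3$ is attained, together with the upper bound in the cases it covers. I expect that combining the explicit realization given here with the ``powers of screenings'' technique of \cite{NT}, \cite{AdM-2011} will eventually produce all the projective covers and convert the top-level estimate into the full uniform bound, but obtaining this control simultaneously for all $(p,p')$ is the crux of the difficulty.
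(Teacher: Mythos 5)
This statement is labeled a \emph{Conjecture} in the paper, and the paper offers no proof of it: the authors only say it is ``motivated by (conjectural) formulas of characters of irreducible $\mathcal{W}_{p,p'}$-modules,'' and the body of the paper establishes only the complementary \emph{lower} bound, namely that rank $3$ is attained on the explicitly constructed modules $\overline{V(p,p')}$ and $\overline{MV(p,p')}$. So there is no paper proof to match your argument against, and your proposal does not close the gap either --- you say as much yourself in your final paragraph. To be concrete about where the argument fails as a proof: (i) the reduction to Zhu-type algebras $A_n(\mathcal{W}_{p,p'})$ requires $C_2$-cofiniteness (still conjectural for general $p,p'$) even to know these algebras are finite-dimensional, and a \emph{uniform in $n$} bound on the nilpotency index of the image of $[\omega]$ is precisely the content of the conjecture restated in different language, not a reduction of it; (ii) the second route requires showing that every indecomposable finitely generated module is a subquotient of the lattice-realized logarithmic modules (or of modules obtained by the ``powers of screenings'' technique), which is exactly the classification/projectivity problem the paper explicitly leaves open even for $(p,p')=(3,2)$; (iii) the character heuristic is not a proof, since ordinary graded traces do not see the nilpotent part of $L(0)$ at all --- one needs pseudo-traces, whose modular behavior for $\mathcal{W}_{p,p'}$ is itself conjectural.

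In short, your text is a sensible research program consistent with the paper's stated expectations, but it contains no step that actually establishes the uniform bound $\bigl(L(0)-L_{ss}(0)\bigr)^{3}=0$; the statement remains open, as the paper itself records.
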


\section{Extended vertex algebra $V(p,p')$ }\label{extended-1}

We first define semi-direct product of a vertex algebra $(V,Y)$ and its module $(M,Y_M,d)$.
Let  $$\overline{V}=V \oplus M.$$
We let
$$Y(v_1 + m_1,z)(v_2+m_2)=Y_V(v_1,z) v_2 +Y_M(v_1,z)m_2+ e^{z d} Y_M( v_2,-z)m_1, \ \ v_1,v_2 \in V, \ \ m_1,m_2 \in M.$$
The next results is known \cite{LL}, \cite{AdM-2009}.
\begin{lemma} \label{lema1} The space $\overline{V}$ has a natural vertex algebra structure.
\end{lemma}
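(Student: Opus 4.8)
The plan is to equip $\overline{V} = V \oplus M$ with vacuum $\mathbf{1} \in V$ and translation operator $D = D_V \oplus d$, where $D_V$ is the translation operator of $V$ and $d$ is the given translation operator of the module $M$ (so that $[d, Y_M(v,z)] = Y_M(D_V v, z) = \partial_z Y_M(v,z)$), and then to verify the vertex algebra axioms. It is convenient to record the block structure of the new operator relative to $\overline{V} = V \oplus M$: for $a = v+m$ one has
\[
Y(v+m,z) = \begin{pmatrix} Y_V(v,z) & 0 \\ C_m(z) & Y_M(v,z) \end{pmatrix},
\]
where $C_m(z)\colon V \to M$ is the \emph{creation field} $C_m(z)w = e^{zd}Y_M(w,-z)m$. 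In particular $M$ is a square-zero (abelian) ideal: for $a,b \in M$ one has $Y(a,z)b=0$, and each $Y(m,z)$ is strictly lower triangular. I would first dispose of the elementary axioms. Lower truncation holds since $Y_V(v,z)w \in V((z))$, $Y_M(v,z)n \in M((z))$, and $e^{zd}Y_M(w,-z)m \in M((z))$ (the factor $e^{zd}$ contributes only nonnegative powers of $z$). The vacuum property $Y(\mathbf{1},z)=\mathrm{id}$ and the creation property $Y(a,z)\mathbf{1}=e^{zD}a \in \overline{V}[[z]]$, with constant term $a$, follow from $Y_V(\mathbf{1},z)=\mathrm{id}_V$ and $Y_M(\mathbf{1},z)=\mathrm{id}_M$. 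Translation covariance $Y(Da,z)=\partial_z Y(a,z)$ reduces, on the creation block, to the compatibility $[d,Y_M(w,z)] = Y_M(D_V w, z)$ after using $\partial_z Y_M(w,-z) = -Y_M(D_V w, -z)$ and $[d,e^{zd}]=0$.

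Next I would verify skew-symmetry $Y(a,z)b = e^{zD}Y(b,-z)a$ directly, since the precise form of the creation term is tailored for exactly this. Writing $a=v+m$, $b=w+n$, the $V$-components agree by skew-symmetry in $V$, namely $Y_V(v,z)w = e^{zD_V}Y_V(w,-z)v$, while the two $M$-components $Y_M(v,z)n$ and $e^{zd}Y_M(w,-z)m$ reappear on the right-hand side after cancelling $e^{zD}e^{-zd}=\mathrm{id}$ on $M$. Thus skew-symmetry holds with no extra hypotheses.

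The substantive step is weak commutativity: for all $a,b$ there is $N$ with $(z_1-z_2)^N[Y(a,z_1),Y(b,z_2)]=0$. By the block structure this splits by component. The $V$--$V$ pairs are local by locality in $V$ together with weak commutativity of the module action on the $M$-diagonal; the $M$--$M$ pairs are trivial, being products of two strictly lower triangular operators. The single nontrivial case is $a=v\in V$, $b=n\in M$, where both products land in the $V\to M$ block and locality becomes
\[
(z_1-z_2)^N\,Y_M(v,z_1)\,C_n(z_2) = (z_1-z_2)^N\,C_n(z_2)\,Y_V(v,z_1) \quad \text{on } V.
\]
Substituting $C_n(z_2)w = e^{z_2 d}Y_M(w,-z_2)n$ and conjugating $e^{z_2 d}$ through via the translation property $Y_M(v,z_1)e^{z_2 d} = e^{z_2 d}Y_M(v,z_1-z_2)$, this reduces precisely to weak associativity of the $V$-module $M$,
\[
(z_1-z_2)^N Y_M(v,z_1-z_2)Y_M(w,-z_2)n = (z_1-z_2)^N Y_M(Y_V(v,z_1)w,-z_2)n,
\]
which holds for $N \gg 0$. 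I expect this mixed case to be the main obstacle; the slightly surprising point is that commutativity in $\overline{V}$ is governed by associativity in $M$, and the care lies in the bookkeeping of the formal-variable expansions and the conjugation by $e^{z_2 d}$. Having established the vacuum, translation, and weak commutativity properties, the Jacobi identity, and hence the vertex algebra structure on $\overline{V}$, follows from the standard characterization in \cite{LL}. Alternatively, one may observe that the fields $\{Y(a,z) : a \in \overline{V}\}$ form a local system of vertex operators on $\overline{V}$ and invoke Li's theorem from \cite{LL} directly.
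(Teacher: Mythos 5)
Your proof is correct. For this lemma the paper itself offers no argument at all --- it simply records the construction and says ``the next result is known \cite{LL}, \cite{AdM-2009}'' --- so there is nothing to compare line by line; what you have written is, in effect, the standard proof behind that citation (it is the semidirect-product construction of \cite{LL}, Section 4.8). All the key points are in place: the block-triangular form of $Y(v+m,z)$ with $M$ a square-zero ideal, the observation that the creation block $C_m(z)w=e^{zd}Y_M(w,-z)m$ is engineered exactly so that skew-symmetry holds, and --- the one genuinely nontrivial step --- the reduction of weak commutativity in the mixed case $v\in V$, $n\in M$ to weak associativity of the module $M$ via the conjugation $e^{-z_2 d}Y_M(v,z_1)e^{z_2 d}=Y_M(v,z_1-z_2)$. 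Two small points worth making explicit if you write this up: (i) the datum $d$ in the triple $(M,Y_M,d)$ must satisfy $[d,Y_M(v,z)]=Y_M(D_Vv,z)=\partial_zY_M(v,z)$, which you correctly use both for translation covariance and for the conjugation step; and (ii) in the weak-associativity identity the truncation exponent $N$ depends only on $v$ and the fixed vector $n$, not on the test vector $w$, which is exactly what locality of the pair $Y(v,z_1)$, $Y(n,z_2)$ requires --- this matches the form of weak associativity proved in \cite{LL}. With vacuum, creation, translation and weak commutativity verified, the Jacobi identity follows from the standard criterion in \cite{LL}, as you say.
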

If in addition, $V$ is a vertex operator algebra and $M$ is integrally graded, then $\overline{V}$ is vertex operator algebra.


The vertex operator algebra just described (which is certainly not simple), has potentially many indecomposable modules. The next result
gives a reasonable large source of examples.

\begin{lemma} \label{lema2} Let $\overline{V}$ be as above,  $M_2$ and $M_3$ be $V$-modules, and $\mathcal{Y}( \cdot,z)  \in I {M_3 \choose M \\ M_2}$
an intertwining operator with integral powers of $z$. Then
$$Y_{M_2 \oplus M_3} (v + m,z) (m_2+m_3)=Y(v,z)(m_2+m_3)+\mathcal{Y}(m,z)m_2,$$
$v \in V$, $m_i \in M_i$, $m \in M$,  defines an $\overline{V}$-module structure on the space $M_2 \oplus M_3$.
\end{lemma}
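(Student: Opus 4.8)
The plan is to verify directly that the formula for $Y_{M_2 \oplus M_3}$ satisfies the defining axioms of a module over the vertex algebra $\overline{V}$: the truncation condition, the vacuum property, and the Jacobi identity. Truncation is clear since both $Y$ and $\mathcal{Y}$ are lower-truncated fields, and the vacuum property is immediate because $\mathbf{1}$ lies in the $V$-summand of $\overline{V}$ and $Y(\mathbf{1},z)$ acts as the identity on each $V$-module $M_2$ and $M_3$. I would also note at the outset that the hypothesis that $\mathcal{Y}$ carries only integral powers of $z$ guarantees that $Y_{M_2 \oplus M_3}(u,z)$ involves only integral powers for every $u \in \overline{V}$, so that the delta-function identities to be checked are exactly those appropriate to a module over the (non-generalized) vertex algebra $\overline{V}$.

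For the Jacobi identity I would use bilinearity together with the decomposition $\overline{V}=V\oplus M$ to reduce to four cases, according to whether each of the two $\overline{V}$-arguments lies in $V$ or in $M$. Recall from Lemma \ref{lema1} that the products in $\overline{V}$ are $Y(v_1,z)v_2=Y_V(v_1,z)v_2\in V$, $Y(v_1,z)m=Y_M(v_1,z)m\in M$, $Y(m,z)v_1=e^{zd}Y_M(v_1,-z)m\in M$, and $Y(m,z)m'=0$, and note that $\mathcal{Y}(m,z)$ sends $M_2$ into $M_3$ and annihilates $M_3$. When both arguments lie in $V$, the identity reduces to the Jacobi identity expressing that $M_2\oplus M_3$ is a $V$-module, which holds since $M_2$ and $M_3$ are $V$-modules. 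When the first argument is $v_1\in V$ and the second is $m\in M$, the identity reduces precisely to the Jacobi identity defining $\mathcal{Y}$ as an intertwining operator of type $\binom{M_3}{M\, M_2}$, which holds by hypothesis. When both arguments lie in $M$, every term vanishes: on the right $Y(m,z)m'=0$, while on the left each iterated composition $\mathcal{Y}(m,x_1)\mathcal{Y}(m',x_2)$ is zero, since the inner operator produces a vector in $M_3$ that the outer operator then annihilates.

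The remaining case, with first argument $m\in M$ and second argument $v_1\in V$, is the one with genuine content, and I expect it to be the main obstacle. Here the right-hand side of the Jacobi identity involves $Y(m,x_0)v_1=e^{x_0 d}Y_M(v_1,-x_0)m$, so the identity to be proved is not itself one of the quoted axioms. I would obtain it from the previous (second) case by invoking the $S_3$-symmetry of the Jacobi identity, equivalently the skew-symmetry of intertwining operators (cf. \cite{LL}), performing the substitution $x_1\leftrightarrow x_2$, $x_0\to -x_0$ and matching the resulting operator against the semi-direct-product product $Y(m,x_0)v_1$. The delta-function bookkeeping required to convert the second-case identity into this one, and in particular to reproduce the factor $e^{x_0 d}$ coming from skew-symmetry in $\overline{V}$, is the only step that is not a direct quotation of a known axiom, and is where the argument must be carried out with care.
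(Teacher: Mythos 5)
The paper offers no proof of Lemma \ref{lema2} at all --- it is stated as known (the construction is the standard one from \cite{LL} and \cite{AdM-2009}), so there is nothing to compare your argument against except the standard verification, which is exactly what you carry out. Your case analysis is correct and complete: the $(v,v')$ case is the $V$-module Jacobi identity, the $(v,m)$ case is the Jacobi identity for $\mathcal{Y}$ on the $M_2$-component (and trivially $0=0$ on the $M_3$-component, which you might state explicitly), the $(m,m')$ case vanishes because $\mathcal{Y}$ maps $M_2$ into $M_3$ and kills $M_3$, and the $(m,v)$ case is the only one requiring work. Your plan for that last case is the right one, and the ``delta-function bookkeeping'' you defer is precisely the standard argument (cf.\ Proposition 4.5.7 and Remark 5.4.3 of \cite{LL}) that the Jacobi identity for the ordered pair $(u,u')$ acting on a module follows from the identity for $(u',u)$ together with skew-symmetry in the algebra and the $D$-derivative property of the module map. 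The one hypothesis you use implicitly and should name is that last ingredient: you need $Y_{M_2\oplus M_3}(D_{\overline{V}}\,u,x)=\frac{d}{dx}Y_{M_2\oplus M_3}(u,x)$, which on the $M$-summand amounts to $\mathcal{Y}(d\,m,x)=\frac{d}{dx}\mathcal{Y}(m,x)$; this holds because $d$ is the $L(-1)$-operator of the module $M$ and the $L(-1)$-derivative property is part of the definition of an intertwining operator. With that sentence added, the proof is complete and is the argument the authors are tacitly invoking.
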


\begin{remark} Observe that on the same vector space $M_2 \oplus M_3$ can have several nonisomorphic $\overline{V}$-module structures.
\end{remark}

Consider now a $\overline{V}$-module $M_2 \oplus M_3$ constructed as in Lemma \ref{lema2} (possibly with the additional assumption
that the weights of $M_2$ and $M_3$ are integral).
Then we can apply again the first lemma to get a vertex (operator) algebra  structure on $$\overline{\overline{V}}=V \oplus M_1 \oplus M_2 \oplus M_3.$$
 We are also interested in $\overline{\overline{V}}$-modules. One way to do produce modules is to consider the space
 $$\overline{\overline{M}}=M_4 \oplus M_5 \oplus M_6 \oplus M_7,$$
 where $M_4 \oplus M_5$ and $M_6 \oplus M_7$ are $\overline{V}$-modules and
to employ Lemma \ref{lema2} via an intertwining operator
 \be \label{long-int}
 \mathcal{Y} \in I { M_6 \oplus M_7 \choose  M_2 \oplus M_3 \ \ M_4 \oplus M_5}
 \ee
where $\mathcal{Y}$ is viewed as intertwining operator among $\overline{V}$-modules.
It would be desirable to get an intertwining operator of this type directly from $V$-modules.
The following result comes handy.

\begin{lemma} \label{lema3} Let  $\displaystyle{\mathcal{Y}_1 \in I { M_6 \oplus M_7  \choose M_2  \ \ M_4 \oplus M_5}}$  and $\displaystyle{\mathcal{Y}_2 \in I { M_6 \oplus M_7  \choose M_3  \ \ M_4 \oplus M_5}}$ be  $V$-intertwining operators with integral powers
such that:
$$\mathcal{Y}_2( \cdot, z) (M_5)=0 \ \  {\rm and} \ \  \mathcal{Y}_2(\cdot,z) (M_4 \oplus M_5) \subset M_7((z)),$$
Then
$\mathcal{Y}=\mathcal{Y}_1 + \mathcal{Y}_2$ defines an intertwining operator of type as in (\ref{long-int}).
Consequently $\overline{\overline{M}}$ has a $\overline{\overline{V}}$-module structure.
\end{lemma}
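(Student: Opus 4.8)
The plan is to verify directly that the linear map $\mathcal{Y}=\mathcal{Y}_1+\mathcal{Y}_2$ satisfies the defining axioms of an intertwining operator of type $\binom{M_6\oplus M_7}{\,M_2\oplus M_3\;\;M_4\oplus M_5\,}$ for the extended algebra $\overline{V}$, and then invoke Lemma \ref{lema2} to transfer the resulting $\overline{V}$-module structure to $\overline{\overline{M}}$ as a $\overline{\overline{V}}$-module. Recall that an element of the source $\overline{V}$-module $M_2\oplus M_3$ is a pair, and that the $\overline{V}$-module structures on $M_2\oplus M_3$ and $M_4\oplus M_5$ (hence on the target $M_6\oplus M_7$) are each built from a $V$-action together with one distinguished intertwining operator by the recipe of Lemma \ref{lema2}. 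So ``being an $\overline{V}$-intertwining operator'' unpacks into two conditions: (a) $\mathcal{Y}$ is a $V$-intertwining operator, and (b) $\mathcal{Y}$ is compatible with the extra $M$-part of the $\overline{V}$-action on both the source module and the target module. Condition (a) is immediate because a sum of two $V$-intertwining operators of the same type is again one, and by hypothesis both $\mathcal{Y}_1,\mathcal{Y}_2$ have integral powers of $z$, so $\mathcal{Y}$ does too.

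The real content is condition (b), which is the Jacobi-type (or equivalently, commutator and iterate) identity relating $\mathcal{Y}$ to the extension cocycles defining the $\overline{V}$-module structures. First I would write out the Jacobi identity for $\mathcal{Y}$ against the $\overline{V}$-vertex operator $Y_{M_4\oplus M_5}$ and against the target operator $Y_{M_6\oplus M_7}$, each of which (by Lemma \ref{lema2}) splits as a genuine $V$-vertex operator plus a term coming from the distinguished intertwining operator that encodes the $M$-summand of $\overline{V}$. Plugging in $\mathcal{Y}=\mathcal{Y}_1+\mathcal{Y}_2$, the identity breaks into a ``diagonal'' piece (the pure $V$-part) which holds simply because $\mathcal{Y}_1$ and $\mathcal{Y}_2$ are $V$-intertwining operators, and ``cross'' pieces mixing $\mathcal{Y}_1,\mathcal{Y}_2$ with the extension intertwining operators. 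This is exactly where the two hypotheses on $\mathcal{Y}_2$ enter: the vanishing condition $\mathcal{Y}_2(\cdot,z)(M_5)=0$ kills precisely those cross terms that would otherwise fail to match, and the range condition $\mathcal{Y}_2(\cdot,z)(M_4\oplus M_5)\subset M_7((z))$ guarantees that the remaining cross terms land in the correct summand of the target so that the two sides of the Jacobi identity have components in matching $M_i$'s. I would track the target-summand bookkeeping carefully by treating $\mathcal{Y}_1$ and $\mathcal{Y}_2$ as matrices of $V$-intertwining operators with respect to the decompositions $M_2\oplus M_3$, $M_4\oplus M_5$, $M_6\oplus M_7$, so that the two hypotheses become the statements that certain matrix entries vanish and that a certain entry maps into $M_7$ only.

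I expect the main obstacle to be the compatibility of the lower-triangular/vanishing structure across the two different extension operators: one must check that the term in $\mathcal{Y}_2$ interacts with the $M$-part of $Y_{M_4\oplus M_5}$ (the cocycle that glues $M_4$ to $M_5$) in a way consistent with it landing only in $M_7$. Concretely, when $Y_{M_4\oplus M_5}$ produces an $M_4$-component via its intertwining term acting on $M_5$, the hypothesis $\mathcal{Y}_2(\cdot,z)(M_5)=0$ ensures $\mathcal{Y}_2$ does not see the troublesome input, while the range hypothesis ensures the output of $\mathcal{Y}_2$ on what it does see is confined to $M_7$, matching where the target cocycle deposits things. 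Once these two hypotheses are seen to exactly cancel the potentially obstructing cross terms in the Jacobi identity, the verification of the full intertwining-operator axiom is routine formal manipulation of delta-function identities with no further analytic subtlety. The final sentence of the lemma is then an immediate application of Lemma \ref{lema2} with the intertwining operator $\mathcal{Y}$ just constructed, yielding the asserted $\overline{\overline{V}}$-module structure on $\overline{\overline{M}}$.
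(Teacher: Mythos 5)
The paper states Lemma \ref{lema3} without proof, so there is nothing to compare your argument against line by line; your outline is the natural (and presumably intended) one: split the Jacobi identity for $\mathcal{Y}=\mathcal{Y}_1+\mathcal{Y}_2$ into the pure $V$-part, which holds summand by summand because $\mathcal{Y}_1$ and $\mathcal{Y}_2$ are $V$-intertwining operators, and the cross terms coming from the $M$-summand of $\overline{V}$, which the two hypotheses on $\mathcal{Y}_2$ are designed to control. The matrix bookkeeping you describe for tracking which summand of $M_6\oplus M_7$ each term lands in is correct as far as it goes.

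The gap is in your final claim that, once the hypotheses kill the obstructing cross terms, the rest is ``routine formal manipulation of delta-function identities with no further analytic subtlety.'' It is not. The Jacobi identity for $\mathcal{Y}$ against an element $m\in M\subset\overline{V}$ involves products such as $\mathcal{Y}_{67}(m,z_1)\mathcal{Y}_1(w_2,z_2)$ and $\mathcal{Y}_1(w_2,z_2)\mathcal{Y}_{45}(m,z_1)$ together with the iterate $\mathcal{Y}_2\bigl(\mathcal{Y}_{23}(m,z_0)w_2,z_2\bigr)$, where $\mathcal{Y}_{23},\mathcal{Y}_{45},\mathcal{Y}_{67}$ are the extension cocycles defining the three $\overline{V}$-module structures via Lemma \ref{lema2}. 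The identity required among these is a generalized commutativity/associativity relation between four a priori unrelated $V$-intertwining operators; it does not follow formally from each of them separately satisfying the $V$-Jacobi identity, and no amount of delta-function algebra will manufacture it. (The same issue already lurks inside Lemma \ref{lema2} itself: one needs $\mathcal{Y}(m,z_1)$ and $\mathcal{Y}(m',z_2)$ to be mutually local with vanishing iterate, which is an extra condition on the chosen intertwining operator.) In this paper the compatibility is supplied by the concrete realization: every intertwining operator in sight is a restriction of the single generalized vertex operator map $Y_{V_{\widetilde L}}$ on the generalized vertex algebra $V_{\widetilde L}$, the relevant pairings $\langle \lambda_i,\lambda_j\rangle$ are integral so no monodromy corrections appear, and the generalized Jacobi identity of $V_{\widetilde L}$ then yields exactly the mixed identities you need. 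Your proof should either add this mutual compatibility as an explicit hypothesis or invoke the lattice realization at this point; as written, the step you dismiss as routine is the only step with real content.
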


The next goal is to apply all previous results in the setup of $V_L$-modules.
First observe that $V_L $ is a vertex operator algebra with
central charge $c_{p,p'}$ and that $V_{L- \a /p}$,  $V_{L+ \a /
p'}$ and $V_{L + \a/p' - \a /p }$ are simple  $V_L$-modules with integral weights.

Then we equip
$$V_L
\oplus V_{L-\a / p}$$
with a vertex operator algebra structure as in Lemma \ref{lema1}.
Because of a generalized vertex operator algebra structure on $(V_{\tilde{L}}, Y_{V_{\widetilde L}} ,{\bf 1})$, the restriction of $Y$
on $V_{L+\lambda_1}$ when acting on $V_{L+\lambda_2}$  gives an intertwining operator of type
 ${ V_{L+\lambda_1+\lambda_2} \choose V_{L+\lambda_1} \ V_{L+\lambda_2}}$. Moreover, it is known that this 
 space is one-dimensional. By using this fact and Lemma \ref{lema2}, we equip
$$ V_{ L + \a /p'} \oplus V_{L + \a/p' - \a /p }$$
with a
$V_{L} \oplus V_{L - \a /p}$--module.
Therefore on the space
 \begin{center}
 \shadowbox{
 $ {V(p,p') = V_L \oplus V_{L - \a /p} \oplus V_{L + \a / p'} \oplus V_{L+ \a /p' - \a /p}},
 $
 }
 \end{center}
there is a structure of a vertex operator algebra, such that the associated
vertex operators can be reconstructed from the generalized vertex
algebra $(V_{\widetilde{L}}, Y_{V_{\widetilde L}})$ as follows:
Let $u, v \in V(p,p')$,
$$ u = u_1 + u_2 + u_3 + u_4, \quad v= v_1 + v_2 + v_3 + v_4$$
$$u_1, v_1 \in V_L, \quad u_2,v_2 \in V_{L- \a/p}, \quad  u_3, v_3 \in V_{L+ \a/p'}, \quad  u_4, v_4 \in V_{L + \a/p' - \a/p}.$$
Then
\bea &&Y (u,z) v =Y_{V_{\widetilde L}} (u_1, z) v + Y_{V_{\widetilde L}} (u_2, z) (v_1 + v_3) + Y_{V_{\widetilde L}}(u_3,z) (v_1 + v_2) + Y_{V_{\widetilde L}}(u_4,z) v_1. \nonumber \eea


Now we discuss  $V(p,p')$-module.  Since we would like to invoke Lemma \ref{lema3} we are
heavily constrained with the choice of $M_i$, $4 \leq i \leq 7$. After a short analysis, besides $V(p,p')$ we narrow down
another "interesting" \footnote{We can certainly construct more examples by taking $\mathcal{Y}_1$ or $\mathcal{Y}_2$   to act  trivially.}
example.

%

\begin{proposition}  \label{another-log} The space
$$MV(p,p')=V_{L+\alpha/2} \oplus V_{L+\alpha/2-\alpha/p} \oplus V_{L+\alpha/2+\alpha/p'} \oplus V_{L+\alpha/2+\alpha/p'-\alpha/p}.$$
has a natural $V(p,p')$-module structure.
\end{proposition}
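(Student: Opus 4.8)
The plan is to realize the $V(p,p')$-module structure on $MV(p,p')$ by matching it to the abstract framework of Lemma \ref{lema3}, exactly as $V(p,p')$ itself was assembled from $V_L$-modules. First I would set up the dictionary: write $\overline{\overline{V}}=V(p,p')$ with the four summands $V_L,\ V_{L-\a/p},\ V_{L+\a/p'},\ V_{L+\a/p'-\a/p}$ playing the roles of $V\oplus M_1\oplus M_2\oplus M_3$. For the module I would take $\overline{\overline{M}}=MV(p,p')$ with
$$M_4=V_{L+\alpha/2},\quad M_5=V_{L+\alpha/2-\alpha/p},\quad M_6=V_{L+\alpha/2+\alpha/p'},\quad M_7=V_{L+\alpha/2+\alpha/p'-\alpha/p},$$
so that $M_4\oplus M_5$ and $M_6\oplus M_7$ are the two $\overline{V}$-modules required by the construction (here $\overline{V}=V_L\oplus V_{L-\a/p}$, with its module structures coming, as before, from the fusion $V_{L+\lambda_1}\times V_{L+\lambda_2}\to V_{L+\lambda_1+\lambda_2}$ inside the generalized vertex algebra $V_{\widetilde L}$).

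Next I would supply the two $V_L$-intertwining operators demanded by Lemma \ref{lema3}. Both come for free from the restriction of $Y_{V_{\widetilde L}}$: acting by $V_{L+\a/p'}$ gives an operator of type ${M_6\oplus M_7\choose M_2\ \ M_4\oplus M_5}$ since $\a/p'$ shifts $L+\alpha/2\mapsto L+\alpha/2+\alpha/p'$ and $L+\alpha/2-\alpha/p\mapsto L+\alpha/2+\alpha/p'-\alpha/p$; this is my $\mathcal{Y}_1$. Acting by $V_{L+\a/p'-\a/p}$ gives $\mathcal{Y}_2$ of type ${M_6\oplus M_7\choose M_3\ \ M_4\oplus M_5}$. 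The main verification is then the two vanishing/containment conditions of Lemma \ref{lema3}. I would check $\mathcal{Y}_2(\cdot,z)(M_5)=0$ and $\mathcal{Y}_2(\cdot,z)(M_4\oplus M_5)\subset M_7((z))$ by tracking lattice cosets: $V_{L+\a/p'-\a/p}$ acting on $M_4=V_{L+\alpha/2}$ lands in the coset $L+\alpha/2+\alpha/p'-\alpha/p$, which is $M_7$, while its action on $M_5=V_{L+\alpha/2-\alpha/p}$ would land in $L+\alpha/2+\alpha/p'-2\alpha/p$, a coset not among $M_6,M_7$, forcing that component to be zero once we project onto $M_6\oplus M_7$. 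I also need the integrality of powers of $z$, which follows by computing the relevant inner products $\langle\lambda_1,\lambda_2\rangle$ against $\langle\a,\a\rangle=2pp'$ and checking they are integers; the half-integer shift by $\alpha/2$ is precisely what must be monitored here.

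The step I expect to be the main obstacle is exactly this integrality bookkeeping together with the containment $\mathcal{Y}_2(\cdot,z)(M_4\oplus M_5)\subset M_7((z))$: one must confirm that the shifted weights of $MV(p,p')$ remain integrally graded (so that Lemma \ref{lema1}/\ref{lema2} apply and the $z$-powers are integral), and that the "off-diagonal" component of $\mathcal{Y}_2$ genuinely vanishes rather than merely being small. Concretely, the danger is that $\langle\alpha/2,\,\text{shift}\rangle$ produces a half-integer monodromy that would violate the integral-power hypothesis; I would resolve this by using coprimality of $p,p'$ and the even-lattice condition to show the relevant pairings lie in $\Z$ on the nose. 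Once the hypotheses of Lemma \ref{lema3} are confirmed, the conclusion that $\overline{\overline{M}}=MV(p,p')$ carries a $\overline{\overline{V}}=V(p,p')$-module structure is immediate from that lemma, and the reconstruction formula for $Y$ on $MV(p,p')$ in terms of $Y_{V_{\widetilde L}}$ is then read off in direct analogy with the formula displayed for $V(p,p')$.
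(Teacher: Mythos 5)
Your proposal is correct and takes essentially the same approach the paper intends: the Proposition is stated without a written proof precisely because Lemmas \ref{lema2} and \ref{lema3} were set up for it, and your identification $M_4=V_{L+\alpha/2}$, $M_5=V_{L+\alpha/2-\alpha/p}$, $M_6=V_{L+\alpha/2+\alpha/p'}$, $M_7=V_{L+\alpha/2+\alpha/p'-\alpha/p}$ together with the two restricted operators from $Y_{V_{\widetilde L}}$ is exactly the intended application. The integrality you flag does hold on the nose, since the only pairings that occur are $\langle\alpha/p',\alpha/2\rangle=p$, $\langle\alpha/p,\alpha/2\rangle=p'$, $\langle\alpha/p',\alpha/p\rangle=2$ and their integer combinations, so the hypotheses of Lemma \ref{lema3} are met.
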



\section{Weak vertex operators and "local subsets"}

In this section we briefly outline basic facts about  "local subsets" or "local systems" following \cite{LL}, but with 
some modifications.
Let $W$ be a complex vector space, equipped with an action of a derivation operator $d$.
In \cite{LL}, a weak vertex operator (i.e. a "field") on $(W,d)$ is a formal series
$$a(z)=\sum_{n \in \mathbb{Z}} a_n z^{-n-1} \in {\rm End}(W)[[z,z^{-1}]]$$
such that $a(z)b \in \mathbb{C}((z))$ for all $b \in W$ and
\be \label{derivation}
[d,a(z)]=\frac{d}{dx}a(x).
\ee

In many constructions of vertex algebras and their modules, the operator $d$ comes naturally.
But in this paper we do not assume (\ref{derivation}). Actually, as we shall see shortly, there is no (obvious) operator $d$
satisfying (\ref{derivation}), so we only work on $W$ and not on $(W,d)$.

Denote by $\mathcal{E}(W)=Hom(W,W((z)))$ and set
$$Y_{\mathcal{E}}(a(z),x_0)b(z)=\sum_{n \in \mathbb{Z}} a(z)_n b(z) z_0^{-n-1},$$
where $n$--th product of the fields $a(z)$ and $b(z)$ is defined by
\be \label{nprod}
 a(z) _n b(z) = \mbox{Res}_{z_1} \left( ( z_1 -z) ^n a(z_1) b(z) - (-z +z_1) ^n b(z) a(z_1 ) \right).
 \ee
We should refer to the triple $(\mathcal{W},Y_{\mathcal{E}},1_{W})$ as the canonical weak vertex algebra
associated to $W$ (or $(W,d)$ if there is $d$ such that (\ref{derivation}) holds).

Weak vertex operators $a(z)$ and $b(z)$ are said to be mutually local if there exists a nonnegative integer $k$ such that
$$(z_1-z_2)^k a(z_1)b(z_2)=(z_1-z_2)^k b(z_2)a(z_1).$$
If a weak vertex operator is local with itself it is called vertex operator.

\begin{definition} A subset or a subspace $S$ of  $\mathcal{E}(W)$ is said to be local if all weak vertex operators
in $S$ are mutually local. A local subalgebra $\mathcal{E}(W)$ is a weak vertex subalgebra which is local.
\end{definition}

In fact, any local subalgebra of $\mathcal{E}(W)$ is a vertex algebra. In particular, any maximal local subspace of $\mathcal{E}(W)$ is
a vertex algebra with $W$ as a faithful module. Finally, we have a result form \cite{LL}:

\begin{theorem} Suppose $S$ is a set of mutually local vertex operator on $W$, that is a local subset of $\mathcal{E}(W)$. Then
the weak vertex subalgebra $\langle S \rangle$ generated by $S$ is a vertex algebra, with $W$ as a natural faithful $\langle S \rangle $- module,
and the set
$$ \{ u ^ {(1)} (z) _{n_1} \cdots u ^{(r)} (z) _{n_r} I_W   \ \vert \ r \in {\Zp}, \ u ^{(1)} (z), \dots, u ^{(r)}(z) \in S, \ n_1, \dots , n_r \in \Z \},$$
spans all of $\langle S \rangle$.
\end{theorem}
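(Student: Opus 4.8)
The plan is to reduce the statement to the two facts quoted immediately above it---that every local subalgebra of $\mathcal{E}(W)$ is a vertex algebra, and that $W$ is a faithful module over such an algebra---by proving that the generated object $\langle S \rangle$ is itself a \emph{local} weak vertex subalgebra. The crucial input is a Dong-type lemma: if $a(z),b(z),c(z) \in \mathcal{E}(W)$ are pairwise mutually local, then for every $n \in \Z$ the field $a(z)_n b(z)$ is again mutually local with $c(z)$. Granting this, I would argue by induction on the number of products (\ref{nprod}) used to build a given element of $\langle S \rangle$ that all such elements lie in one common local subset; since $I_W$ is trivially local with every field, it follows that $\langle S \rangle$ is a local subalgebra, hence a vertex algebra by the cited result. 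The module structure on $W$ is the evaluation action $Y_W(a(z),x_0)=a(x_0)$, and faithfulness is immediate, because a field annihilating all of $W$ is the zero field.

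First I would record the purely algebraic facts about $(\mathcal{E}(W),Y_{\mathcal{E}},I_W)$ that do \emph{not} use locality. A short residue computation from the definition of the $n$-th product shows that $I_W$ satisfies the vacuum axiom, $Y_{\mathcal{E}}(I_W,x_0)b(z)=b(z)$, and the creation property, $Y_{\mathcal{E}}(a(z),x_0)I_W \in \mathcal{E}(W)[[x_0]]$ with constant term $a(z)$; moreover $\tfrac{d}{dz}$ serves as the canonical derivation, with $a(z)_{-2}I_W=\tfrac{d}{dz}a(z)$. These isolate the axioms that hold on all of $\mathcal{E}(W)$, leaving the Jacobi identity as the only statement requiring the local hypothesis. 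For the spanning claim, I would show that although $\langle S \rangle$ is a priori built from arbitrarily nested products, it is already spanned by the right-nested expressions $u^{(1)}(z)_{n_1}(\cdots (u^{(r)}(z)_{n_r} I_W)\cdots)$. Using the iterate (associativity) formula valid on a local subspace, any product $(c(z)_m d(z))_n e(z)$ expands as a finite linear combination of terms $c(z)_{m'}(d(z)_{n'} e(z))$ and $d(z)_{n'}(c(z)_{m'} e(z))$; inducting on the total number of products then pushes every product to the right, the recursion terminating at $I_W$.

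The genuine content, and the main obstacle, lies in the two locality-driven identities from which the Jacobi identity on $\langle S \rangle$ is assembled: weak commutativity---which is precisely the mutual-locality hypothesis propagated to all of $\langle S \rangle$ via Dong's lemma---and weak associativity. Establishing weak associativity for the products (\ref{nprod}) is the delicate part: it requires a careful manipulation of the defining residues and the standard $\delta$-function/formal-expansion identities to match the two sides after extracting a suitable power of $(z_1-z_2)$. I expect this residue calculation, together with the verification that Dong's lemma indeed closes $\langle S \rangle$ under locality, to carry essentially all the difficulty, since it simultaneously yields the vertex-algebra axioms and furnishes the rewriting rule that drives the reduction to the right-nested spanning set.
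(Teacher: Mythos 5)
The paper does not actually prove this statement: it is quoted verbatim from \cite{LL} (``Finally, we have a result from \cite{LL}''), so there is no in-paper argument to compare against. Your outline is precisely the standard proof in \cite{LL}: Dong's lemma propagates pairwise locality to the iterated products so that $\langle S \rangle$ is a local subalgebra, the general fact that local subalgebras of $\mathcal{E}(W)$ are vertex algebras then applies, the module structure is evaluation $Y_W(a(z),x_0)=a(x_0)$ with faithfulness immediate, and the iterate formula rewrites arbitrarily nested products into the right-nested spanning set. As a blueprint this is correct and complete in structure; the only caveat is that the two results carrying all the technical weight --- Dong's lemma (including the double application needed to show products of elements of one generation are local with each other, not just with the original generators) and weak associativity for the products (\ref{nprod}) --- are announced rather than established, exactly as they are in the paper itself, which defers them to \cite{LL}.
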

The module structure in the previous theorem is given by
\be \label{mod-W}
Y_{W}(a(z),x_0)=a(x_0).
\ee






%
%
%

\vskip 5mm

\section{An extension of  $\mathcal{W}_{p,p'}$}
\label{extended-2}

In this section we shall construct an extension on the vertex algebra $\mathcal{W}_{p,p'}$ on a vertex algebra of  local fields acting on a logarithmic representations constructed in \cite{AdM-2009}. 

\vskip 3mm

Consider again the vertex algebra
 $$ V(p,p') = V_L \oplus V_{L - \a /p} \oplus V_{L + \a / p'} \oplus V_{L+ \a /p' - \a /p} $$
and a family of weak vertex operators (i.e. fields)

$$ S = \{ e ^{\a /p'} (z), \widetilde{Y}(v,z) \ \vert \ v \in \mathcal{W}_{p,p'} \}, 
 $$
acting on $V(p,p')$,
where
\bea
 \widetilde{Y}(v,z) & =& Y( \Delta(e ^{-\a /p}, z) v , z) \\
\Delta(v,z) &=& z^{v_0} \exp \left( \sum_{n=1} ^{\infty}
\frac{v_n}{-n}(-z)^{-n} \right),
\eea
and
$$e^{\a/p'}(z)={Y}(e^{\a/p'},z),$$
as in Section \ref{sec-def-w}.
By using result from \cite{Li} and \cite{AdM-2009}, we can easily prove
\begin{proposition} $S$ is a local subset on $V(p,p')$.
\end{proposition}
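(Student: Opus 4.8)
The plan is to show that the set $S = \{ e^{\a/p'}(z), \widetilde{Y}(v,z) \mid v \in \mathcal{W}_{p,p'}\}$ is local by verifying mutual locality for each of the three relevant pairings of field types. First I would record the underlying reason these fields live on $V(p,p')$ at all: the vertex operator $e^{\a/p'}(z) = Y(e^{\a/p'},z)$ comes from the generalized vertex algebra structure on $V_{\widetilde L}$, and since $\langle \a/p', \a/p'\rangle = 2pp'/p'^2 = 2p/p'$ together with the pairings against the cosets $L-\a/p$, $L+\a/p'$, $L+\a/p'-\a/p$ all yield integral powers of $z$ on the four summands of $V(p,p')$, this is genuinely a weak vertex operator mapping $V(p,p')$ into $V(p,p')((z))$. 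The modified operator $\widetilde Y(v,z) = Y(\Delta(e^{-\a/p},z)v,z)$ is the $\Delta$-twist of Li, and the whole point of invoking \cite{Li} is that this twist converts the action of $Y(v,z)$ for $v \in \mathcal{W}_{p,p'}$ into a well-defined field on the shifted modules; I would cite the standard property of $\Delta$ that it produces a weak vertex operator whose locality is controlled by the locality of the untwisted fields.

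The core of the argument is three locality checks. (1) For two fields $\widetilde Y(v,z)$ and $\widetilde Y(w,z)$ with $v,w \in \mathcal{W}_{p,p'}$: since $\mathcal{W}_{p,p'} \subset V_L$ and the ordinary vertex operators $Y(v,z), Y(w,z)$ on the vertex algebra $V_L$ (and its modules) are mutually local, the $\Delta$-operator $\Delta(e^{-\a/p},z)$ is precisely designed (Li's construction) to preserve mutual locality — twisting both fields by the same $\Delta$ shifts the module structure but keeps a common locality bound $k$. This is exactly the situation already treated in \cite{AdM-2009}, so I would quote that computation rather than redo it. (2) For the pairing of $e^{\a/p'}(z)$ with itself: locality with itself (making it a bona fide vertex operator) follows from the generalized Jacobi identity in $V_{\widetilde L}$, with the locality order determined by $\langle \a/p',\a/p'\rangle = 2p/p'$; I would point out that because $p,p'$ are coprime, the relevant powers of $(z_1-z_2)$ clear the fractional exponents after multiplying by a sufficiently high integer power. (3) The genuinely mixed and most delicate case: mutual locality of $e^{\a/p'}(z)$ with $\widetilde Y(v,z)$. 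Here I would exploit the fundamental fact that $v \in \mathcal{W}_{p,p'}$ lies in $\mathrm{Ker}\, Q$, i.e. $v$ commutes (in the appropriate graded sense) with the residue of $e^{\a/p'}$, which is the screening $Q$. The defining property $[Q,Y(v,z)]=0$ for $v \in \mathcal{W}_{p,p'}$, transported through the $\Delta$-twist, is what forces the commutator of $e^{\a/p'}(z)$ and $\widetilde Y(v,z)$ to vanish after multiplication by a suitable power of $(z_1-z_2)$.

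The main obstacle I anticipate is precisely case (3): one must check that the $\Delta$-twist interacts correctly with the screening operator, so that the commutation $[e^{\a/p'}_0, Y(v,z)]=0$ upstairs still yields mutual locality for the twisted field $\widetilde Y(v,z)$ downstairs. The subtlety is that $\Delta(e^{-\a/p},z)$ involves the other screening momentum $-\a/p$, and one needs the fractional exponents arising from $\langle \a/p', -\a/p\rangle = -2$ (an integer, fortunately) to combine with the exponents from $\langle \a/p', \a/p'\rangle$ so that a single integer power of $(z_1-z_2)$ symmetrizes the product. I would verify this by tracking the $\a$-charge on each of the four coset summands of $V(p,p')$ and confirming that the total power of $z_1-z_2$ needed is bounded independently of the summand. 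Once this bound is exhibited uniformly, mutual locality of all pairs in $S$ follows, and since $S$ is closed under the relevant operations and $V(p,p')$ serves as a faithful module, $S$ is a local subset as claimed.
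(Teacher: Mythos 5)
Your division of the locality check into three types of pairs is a sensible way to organize the argument, and your case (1) (two $\Delta$-twisted fields, quoting Li and \cite{AdM-2009}) is fine. Case (3) also goes through, but for a more elementary reason than the one you emphasize: the lattice pairings $\langle\alpha/p',\alpha\rangle=2p$ and $\langle\alpha/p',-\alpha/p\rangle=-2$ are integers, so generalized commutativity in $V_{\widetilde L}$ already yields an integral locality bound for $e^{\alpha/p'}(z)$ against any field built from $V_L\oplus V_{L-\alpha/p}$; the condition $v\in\mathrm{Ker}\,Q$ plays no role in locality itself (it is needed later, e.g.\ to show $\widetilde{H(z)}_0=0$ and to make the $\Delta(\widetilde{H(z)},z_1)$ deformation well defined). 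Since the paper disposes of this proposition with a one-line citation to \cite{Li} and \cite{AdM-2009}, you are in any case supplying more detail than the authors do.

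However, your case (2) contains a genuine error. You claim that self-locality of $e^{\alpha/p'}(z)$ follows because, $p$ and $p'$ being coprime, ``the relevant powers of $(z_1-z_2)$ clear the fractional exponents after multiplying by a sufficiently high integer power.'' This cannot work: $\langle\alpha/p',\alpha/p'\rangle=2p/p'$ is a non-integer for every $p'\geq 3$ (coprimality forces $p'\mid 2$ for integrality), and multiplying $(z_1-z_2)^{2p/p'}$ by an integer power of $(z_1-z_2)$ leaves the exponent fractional, so no integer $k$ can symmetrize the product in the sense required by the definition of locality. The correct mechanism is entirely different and rests on the truncated vertex operator map of $V(p,p')$ constructed in Section 3: since $e^{\alpha/p'}$ lies in the summand $V_{L+\alpha/p'}$, the operator $Y(e^{\alpha/p'},z)$ acts only on $V_L\oplus V_{L-\alpha/p}$ and maps it into $V_{L+\alpha/p'}\oplus V_{L+\alpha/p'-\alpha/p}$, on which it acts as zero. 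Hence $e^{\alpha/p'}(z_1)e^{\alpha/p'}(z_2)=0=e^{\alpha/p'}(z_2)e^{\alpha/p'}(z_1)$ identically, and locality of this pair is trivial; no fractional exponents ever arise. This nilpotency of the extension is precisely why $V(p,p')$ is built as an iterated semidirect product via the lemmas of Section 3, and it is the step your write-up misses.
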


Therefore, according to \cite{LL}, the set  $S$ generates a vertex algebra $$\mathcal{V} :=\la S \ra.$$
Let $Y_{\mathcal V}$ be the corresponding vertex operator map.
\begin{remark} \label{deri} The vertex algebra $\mathcal{V}$ is invariant under the differentiation operator $D=\frac{d}{dz}$  
such that
$$a(z) \mapsto a(z)_{-2} \cdot I(z) \quad (=Da (z)),$$
where $I(z)$ is the identity map. 

\end{remark}

Then
$$ \Phi : \mathcal{W}_{p,p'}  \longrightarrow \mathcal{V}$$
$$ v \mapsto \widetilde{Y}(v,z), \quad v \in \mathcal{W}_{p,p'}$$
is an injective vertex algebra homomorphism.   Moreover, $V(p,p')$ is a $\mathcal{V}$--module, and therefore $V(p,p')$ is a $\mathcal{W}_{p,p'}$--module.

Recall that (cf. \cite{AdM-2009})
$$ \widetilde{L(z)} = \widetilde{Y}(\omega,z) = \sum_{n \in \mathbb{Z}} \widetilde{L(n)} z^{-n-2} $$
$$=Y(\omega,z) + z^{-1} e ^{-\a/p} (z) = L(z) + z^{-1} e ^{-\a/p} (z). $$


\begin{remark} Observe that (\ref{derivation})  does not hold in general if we take $d=\widetilde{L(-1)}$ or perhaps $d=L(-1)$, and $a(z) \in S$.
\end{remark}

\begin{lemma} \label{pom-1} We have:
\bea
&& e^{\a /p'} (z) _1 \widetilde{L(z)} = e ^{\a/p'} (z) + z^{-1} e^{ \a /p' - \a /p}(z)  \\
&& (z ^{-1} e^{ \a /p' - \a /p} (z) )_1 \widetilde{L(z)} =0,  \\
&& e^{\a /p'} (z)_0 \widetilde{L(z)}= \frac{p}{p-p'} ( z^{-1} D e^{\a /p' - \a /p} (z) ),  \\
&& ( z ^{-1} e^{\a /p' - \a /p} (z) )_0 \widetilde{L(z)} = - z^{-1} (D e ^{\a /p' - \a /p} (z) ).
\eea

\end{lemma}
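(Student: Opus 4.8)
The plan is to compute each of the four products directly from the definition of the $n$-th product of fields \eqref{nprod}, using the explicit formula $\widetilde{L(z)} = L(z) + z^{-1} e^{-\a/p}(z)$ and the fact that all the fields involved arise from the generalized vertex operator map $Y_{V_{\widetilde L}}$ on $V_{\widetilde L}$. The key observation is that products of such fields are governed by the underlying operator product expansions in $V_{\widetilde L}$, so that $a(z)_n \widetilde{L(z)}$ can be read off from the singular part of $a(z_1)\widetilde{L(z)}$ as a Laurent expansion in $(z_1 - z)$. Since $\widetilde{L(z)}$ splits as $L(z) + z^{-1}e^{-\a/p}(z)$, I would treat the two summands separately: the interaction of $e^{\a/p'}(z)$ (and of $z^{-1}e^{\a/p'-\a/p}(z)$) with the genuine Virasoro field $L(z)$ is standard, while the interaction with $z^{-1}e^{-\a/p}(z)$ is controlled by the lattice commutation relations, i.e. by the pairings $\la \a/p', -\a/p \ra = -2p'$, $\la \a/p'-\a/p, -\a/p\ra$, and $\la \a/p', \a/p'\ra$ computed from $\la \a,\a\ra = 2pp'$.

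First I would record the relevant contractions. The field $e^{\a/p'}(z)$ is primary of a definite conformal weight with respect to $L(z)$, so $e^{\a/p'}(z)_1 L(z)$ and $e^{\a/p'}(z)_0 L(z)$ reproduce the usual weight and derivative terms; the genuinely new contributions come from pairing $e^{\a/p'}$ against $e^{-\a/p}$, whose vertex operators obey the standard lattice OPE $Y_{V_{\widetilde L}}(e^{\beta},z_1)e^{\gamma} = (z_1-z)^{\la\beta,\gamma\ra}\,:\cdots:$. The exponent $\la \a/p', -\a/p\ra = -2p'$ is a negative integer, which is exactly what produces a pole and hence nonzero $n$-th products for the appropriate $n$; extracting the coefficients of $(z_1-z)^{-1}$ and $(z_1-z)^{-2}$ (together with the $z^{-1}$ prefactor from $\widetilde{L(z)}$) should yield the field $e^{\a/p'-\a/p}(z)$ and its derivative. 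I would then assemble the four identities by collecting these contributions, keeping careful track of the explicit scalar $\frac{p}{p-p'}$, which must come from the linear term $\frac{p-p'}{2pp'}\a(-2)\mathbf{1}$ in $\om$ interacting with the charge of $e^{\a/p'}$.

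The second and fourth identities, involving $z^{-1}e^{\a/p'-\a/p}(z)$, I would handle the same way, noting that $e^{\a/p'-\a/p}(z)$ should be primary (or annihilated at the relevant product) with respect to $\widetilde{L(z)}$ — indeed the second identity asserts $(z^{-1}e^{\a/p'-\a/p}(z))_1\widetilde{L(z)}=0$, which I expect to follow because the conformal weight contribution and the lattice-pairing contribution cancel, reflecting that $\widetilde{L(z)}$ is the deformed Virasoro field under which $e^{\a/p'-\a/p}$ sits at the top of its block. The main obstacle will be the bookkeeping: the $z^{-1}$ prefactors do not commute with the residue/expansion operations in a naive way (one must expand $z^{-1}$ consistently and be careful about whether derivatives $D$ hit the prefactor or the exponential field), and the scalar constants must be pinned down exactly rather than up to sign. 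I would therefore verify the normalizations by testing the identities against a lowest-weight vector in $V(p,p')$, where the abstract field identities reduce to finite, checkable computations, and use that to fix all coefficients including the crucial factor $\frac{p}{p-p'}$.
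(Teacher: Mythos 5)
Your approach is essentially the paper's: the paper also computes the non-negative products directly from (\ref{nprod}) (rewriting $a(z)_n\widetilde{L(z)}$ for $n\ge 0$ as $\mbox{Res}_{z_1}(z-z_1)^n[\widetilde{L(z)},a(z_1)]$), splits $\widetilde{L(z)}=L(z)+z^{-1}e^{-\a/p}(z)$, kills the Virasoro part using that $e^{\a/p'}_0=Q$ is a screening, and reads off the rest from the lattice products $e^{\a/p'}_0e^{-\a/p}$ and $e^{\a/p'}_1e^{-\a/p}$; it carries this out only for the first identity and leaves the others as analogous. Two numerical points in your sketch need correction, though, and the first would derail the computation if carried through: the pairing is $\la \a/p',-\a/p\ra=-\frac{1}{pp'}\la\a,\a\ra=-2$, not $-2p'$, and it is precisely this order-two pole that makes $e^{\a/p'}_1e^{-\a/p}=e^{\a/p'-\a/p}$ and $e^{\a/p'}_0e^{-\a/p}=\frac{1}{p'}\a(-1)e^{\a/p'-\a/p}$ the only contributions, matching the four stated formulas (with a pole of order $2p'$ you would instead be extracting deep descendants and the lemma's right-hand sides would not come out). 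Second, the factor $\frac{p}{p-p'}$ does not come from the $\frac{p-p'}{2pp'}\a(-2)\mathbf{1}$ term of $\om$; it is simply the conversion constant in $\frac{1}{p'}\a(-1)e^{\gamma}=\frac{p}{p-p'}\gamma(-1)e^{\gamma}=\frac{p}{p-p'}De^{\gamma}$ for $\gamma=\a/p'-\a/p$, since $De^{\gamma}=\gamma(-1)e^{\gamma}$ holds independently of the shift in the conformal vector. With those two fixes your plan, including the verification of the cancellation in the second identity, goes through exactly as in the paper.
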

\begin{proof}
The proof follows easily by using (\ref{nprod}). We only prove the first formula for illustration; other formulas are proven the same way.
\bea
&& e^{\a/p'}(z) _1 \widetilde{L(z)} = \mbox{Res}_{z_1} ( z -z_1) [\widetilde{L(z)},  e^{\a/p'}(z_1) ]  \nonumber \\
&& = \mbox{Res}_{z_1} ( z -z_1) [L(z)+z^{-1}e^{-\alpha/p}(z),e^{\a/p'}(z_1)] \nonumber \\
&&=  \mbox{Res}_{z_1} z  [L(z)+z^{-1}e^{-\alpha/p}(z),e^{\a/p'}(z_1)] - \mbox{Res}_{z_1} z_1 [L(z)+e^{-\alpha/p}(z),e^{\a/p'}(z_1)] \nonumber \\
&&=\mbox{Res}_{z_1} z  [Y(e^{-\alpha/p},z),Y(e^{\a/p'},z_1)]- \mbox{Res}_{z_1} z_1 [L(z)+z^{-1} e^{-\alpha/p}(z),e^{\a/p'}(z_1)] \nonumber \\
&&= - Y(e^{\a/p'}_0 e^{-\alpha/p},z) + z^{-1} Y(e^{\a/p'}_1 e^{-\alpha/p},z)+Y(e^{\a/p'}_0 e^{-\alpha/p},z) -[L(z),e^{\a/p'}_1] \nonumber \\
&&= z^{-1} Y(e^{\a/p'-\alpha/p},z) +Y(e^{\a/p'},z) \nonumber \\
&& = z^{-1} e^{\a/p'-\alpha/p}(z) +e^{\a/p'}(z).
\eea
\end{proof}

Set $\nu_{p,p'} = \frac{p}{p-p'}. $
Define
$$ \widetilde{H(z)} = e^{\a /p'} (z) + \nu_{p,p'} z ^{-1} e^{\a /p' - \a /p} (z); $$
$$ H(z) = e^{\a /p'} (z) +  z ^{-1} e^{\a /p' - \a /p} (z). $$
Then Lemma \ref{pom-1} implies that
\bea
&& \widetilde{H(z)}_0 \widetilde{L(z)} = 0, \\
&& \widetilde{H(z)}_1 \widetilde{L(z)} = H(z), \\
&& \widetilde{H(z)}_n \widetilde{L(z)} = 0 \qquad ( n \ge 2), \\
&& \widetilde{L(z)} _1 H(z) = H(z). \\
\eea

Note also that $\widetilde{L(z)}_0 H(z) \ne  D H(z)$.

By using commutator formulae in the vertex algebra $\mathcal{V} $ we get
\bea \label{com-field}[ \widetilde{L(z)}_{n+1} , \widetilde{H(z)}_m ] = H(z)_{m+n}. \eea

The vertex algebra $\mathcal{V}$ is an extension of $\mathcal{W}_{p,p'}$. Let us give a description of $\mathcal{V}$ as an $\mathcal{W}_{p,p'}$--module.

Let  $\mathcal{I}$  be the ideal in $\mathcal{V}$ generated by $\mathcal{W}_{p,p'}. z^{-1} e^{ \alpha /p' - \alpha / p} (z). $

\begin{proposition} \label{structure-extend-1}
As a $\mathcal{W}_{p,p'}$--module,   $\mathcal{I}$  is isomorphic to a direct sum of infinitely many cyclic modules generated by singular vectors
 $\{ z^{-k} e^{ \alpha /p' - \alpha / p} (z), \quad  k \in {\Zp} \}$, i.e.,
 $$ \mathcal{I} = \bigoplus_{k =1} ^{\infty} \mathcal{I}_k, \quad \mathcal{I}_k = \mathcal{W}_{p,p'} .  z^{-k} e^{ \alpha /p' - \alpha / p} (z). $$
\end{proposition}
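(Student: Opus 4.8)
The plan is to establish four things in turn: that each field $z^{-k}e^{\a/p'-\a/p}(z)$, $k\ge 1$, lies in $\mathcal{I}$; that it is a singular vector for the $\mathcal{W}_{p,p'}$--action defined by $\Phi$, so that $\mathcal{I}_k$ is cyclic on it; that $\mathcal{I}=\sum_k\mathcal{I}_k$; and that the sum is direct. The workhorse throughout is the elementary identity
$$a(z)_n\big(z^{-k}e^{\a/p'-\a/p}(z)\big)=z^{-k}\big(a(z)_n\,e^{\a/p'-\a/p}(z)\big),\qquad a(z)\in\mathcal{V},\ n\in\Z,$$
which holds because the scalar factor $z^{-k}$ depends only on the second variable and pulls out of the residue in (\ref{nprod}). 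In particular $\mathcal{I}_k=z^{-k}M$, where $M=\mathcal{W}_{p,p'}.e^{\a/p'-\a/p}(z)$ is a single module and the distinct summands are its rescalings by powers of $z$.

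First I would prove singularity. A direct lattice computation with $\om$ shows that $e^{\a/p'-\a/p}(z)$ is an $L$--primary field of conformal weight $0$; moreover, reading off the four--term vertex operator of Section \ref{extended-1}, one sees that $e^{-\a/p}(z)$ is supported only on the summands $V_L$ and $V_{L+\a/p'}$ of $V(p,p')$ and $e^{\a/p'-\a/p}(z)$ only on $V_L$, while the coset $2\a/p'-2\a/p$ is not among the four cosets of $V(p,p')$. Consequently both compositions $e^{-\a/p}(z_1)e^{\a/p'-\a/p}(z)$ and $e^{\a/p'-\a/p}(z)e^{-\a/p}(z_1)$ vanish on $V(p,p')$, i.e. $[e^{-\a/p}(z_1),e^{\a/p'-\a/p}(z)]=0$. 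Since $\widetilde{L(z)}=L(z)+z^{-1}e^{-\a/p}(z)$, the $L$--part of $\widetilde{L(z)}_m\big(z^{-k}e^{\a/p'-\a/p}(z)\big)$ vanishes by primarity (and the inert factor $z^{-k}$) for $m\ge 1$, and the correction part vanishes by the commutator above; hence $\widetilde{L(z)}_m\big(z^{-k}e^{\a/p'-\a/p}(z)\big)=0$ for all $m\ge 1$, so each generator is annihilated by $\widetilde{L(n)}$, $n>0$. I expect the main obstacle here to be the corresponding annihilation by the positive modes of the remaining generators of $\mathcal{W}_{p,p'}$: these are only conjecturally described (weight $(2p-1)(2p'-1)$), so the same support/coset analysis must be pushed through for their $\Phi$--images rather than for $\om$ alone.

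Membership and the whole tower come from one relation. Both the derivation $D$ of Remark \ref{deri} and the operator $\widetilde{L(z)}_0=\Phi(\om)_0$ preserve the ideal $\mathcal{I}$, and by the factorization identity $\widetilde{L(z)}_0\big(z^{-k}e^{\a/p'-\a/p}(z)\big)=z^{-k}\,D\,e^{\a/p'-\a/p}(z)$, whereas $D\big(z^{-k}e^{\a/p'-\a/p}(z)\big)=-k\,z^{-k-1}e^{\a/p'-\a/p}(z)+z^{-k}\,D\,e^{\a/p'-\a/p}(z)$. Subtracting gives
$$\big(\widetilde{L(z)}_0-D\big)\big(z^{-k}e^{\a/p'-\a/p}(z)\big)=k\,z^{-k-1}e^{\a/p'-\a/p}(z),$$
so the failure of $\widetilde{L(-1)}=D$ observed after Lemma \ref{pom-1} is exactly the engine that raises each rung of the tower. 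Starting from $z^{-1}e^{\a/p'-\a/p}(z)\in\mathcal{I}_1$ (which lies in $\mathcal{V}$ since it equals $e^{\a/p'}(z)_1\widetilde{L(z)}-e^{\a/p'}(z)$ by Lemma \ref{pom-1}), induction yields $z^{-k}e^{\a/p'-\a/p}(z)\in\mathcal{I}$ and hence $\mathcal{I}_k\subset\mathcal{I}$. For the reverse inclusion I would show that $\sum_k\mathcal{I}_k$ is already an ideal: it is a $\mathcal{W}_{p,p'}$--submodule by construction, the displayed relation shows $D$ maps $\mathcal{I}_k$ into $\mathcal{I}_k+\mathcal{I}_{k+1}$, and products with the other generator $e^{\a/p'}(z)$ of $\mathcal{V}$ already vanish on $e^{\a/p'-\a/p}(z)$ by the same support computation ($2\a/p'-\a/p$ is again not a coset of $V(p,p')$), which together with the factorization identity keeps them inside $\sum_k\mathcal{I}_k$.

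Finally, directness follows from a clean numerical invariant. On $\mathcal{V}\subset\mathcal{E}(V(p,p'))$ there are two grading operators: the conformal weight $\widetilde{L(z)}_1$ of the vertex algebra $\mathcal{V}$, and the weight operator $\mathcal{L}$ of the \emph{original} Virasoro $L$ acting on $V(p,p')$, given on fields by $\mathcal{L}(a)=[L(0),a]-z\,Da$. Assuming $\Phi(v)$ is $\mathcal{L}$--homogeneous of weight $\mathrm{wt}(v)$ (true for $\om$, and the expected behaviour of the $\Delta$--twist), the modes $\Phi(v)_m$ shift $\widetilde{L(z)}_1$ and $\mathcal{L}$ by the same amount, so the difference $\Delta:=\mathcal{L}-\widetilde{L(z)}_1$ commutes with the $\mathcal{W}_{p,p'}$--action. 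A short computation gives $\Delta\,e^{\a/p'-\a/p}(z)=0$ and $\Delta\big(z^{-k}\psi\big)=z^{-k}\big(\Delta\psi+k\psi\big)$, whence $\Delta=0$ on $M$ and $\Delta=k$ on all of $\mathcal{I}_k=z^{-k}M$. Thus the $\mathcal{I}_k$ lie in distinct eigenspaces of $\Delta$ (intuitively, $\Delta$ simply counts the explicit power of $z$, which is invisible to $\widetilde{L(z)}_1$ precisely because $\widetilde{L(-1)}\ne D$), and any relation $\sum_k x_k=0$ with $x_k\in\mathcal{I}_k$ forces each $x_k=0$. This yields $\mathcal{I}=\bigoplus_k\mathcal{I}_k$, and once singularity is secured for all generators every other step is formal.
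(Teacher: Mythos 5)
Your proposal is correct and follows the same route as the paper: the engine in both is the operator $U=\widetilde{L(z)}_0-D$, which commutes with the $\mathcal{W}_{p,p'}$--action and sends $z^{-k}e^{\a/p'-\a/p}(z)$ to a nonzero multiple of $z^{-(k+1)}e^{\a/p'-\a/p}(z)$, so the whole tower of singular vectors is generated from $z^{-1}e^{\a/p'-\a/p}(z)$. The paper's proof essentially stops there, whereas you additionally verify singularity by a coset/support computation (with an honest caveat about the non-Virasoro generators), prove the reverse inclusion $\mathcal{I}\subseteq\sum_{k}\mathcal{I}_k$, and obtain directness from the eigenvalues of $\mathcal{L}-\widetilde{L(z)}_1$ --- all steps the paper leaves implicit.
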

\begin{proof}
The operator $U=L(z)_0-D$ commutes with the $\mathcal{W}_{p,p'}$-action, so it defines an intertwining operator between $\mathcal{I}$ and itself.
Notice first
$$ \widetilde{L(z)}_0  ( z^{-1} e^{ \alpha /p' - \alpha / p} (z) ) =  z ^{-1} D e^{ \alpha /p' - \alpha / p} (z)  \in \mathcal{I}. $$
Therefore
$$ z^{-2}  e^{ \alpha /p' - \alpha / p} (z) = - D (z^{-1} e^{ \alpha /p' - \alpha / p} (z) ) + \widetilde{L(z)}_0  ( z^{-1} e^{ \alpha /p' - \alpha / p} (z) ) \in \mathcal{I} $$
$$=U (z^{-1} e^{ \alpha /p' - \alpha / p} (z)),$$
where we used Remark \ref{deri}.
So all singular vectors are generated by application of $U^k$ on

\noindent $z^{-1} e^{ \alpha /p' - \alpha / p} (z)$.
\end{proof}
\vskip 3mm
In the case $p'=2$,  by using results and methods developed in \cite{AdM-2011} one proves the following result:
\begin{proposition}
Assume that $p'=2$. Then for every $k \in {\Zp}$  we have $\mathcal{I}_k \cong \mbox{Ker}_{ V_{L + \a / 2-\a /p} } Q $.
Moreover, $${\mathcal V} / \mathcal{I} \cong \mathcal{W}_{p,2} \oplus \mbox{Ker}_{ V_{ L + \alpha /2} } Q. $$
\end{proposition}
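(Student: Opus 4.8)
The plan is to treat the two assertions separately, in both cases exploiting the faithfulness of $V(p,2)$ as a $\mathcal{V}$-module together with the classification of modules of the form $\mbox{Ker}_{V_\mu}Q$ established in \cite{AdM-2011}. First I would set up a state-field correspondence for each $\mathcal{I}_k$. Acting on the vacuum $\mathbf{1}\in V_L\subset V(p,2)$, the generating field $z^{-k}e^{\a/2-\a/p}(z)$ and its $\mathcal{W}_{p,2}$-descendants land in $V_{L+\a/2-\a/p}((z))$, and the creation property in the vertex algebra $\mathcal{V}$ (recall that $Y_{\mathcal{V}}(a(z),x_0)I_W$ recovers $a(z)$) singles out a well-defined state $\Psi_k(a(z))\in V_{L+\a/2-\a/p}$. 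I would then check that $\Psi_k$ intertwines the two $\mathcal{W}_{p,2}$-actions: on $\mathcal{I}_k$ the action is through the modes of $\widetilde{Y}(v,z)$, on the target through the module structure $Y_W(a(z),x_0)=a(x_0)$, and these agree because the module action is literally evaluation of fields. By Lemma \ref{pom-1} the generator is a singular vector, and $\Psi_k$ sends it to the lowest vector $e^{\a/2-\a/p}\in\mbox{Ker}_{V_{L+\a/2-\a/p}}Q$, which by \cite{AdM-2011} cyclically generates that module; hence $\Psi_k$ is surjective, and injectivity reduces to showing that a field in the cyclic module $\mathcal{I}_k$ is determined by its action on $\mathbf{1}$, so that $\Psi_k(a(z))=0$ forces $a(z)$ to annihilate $V(p,2)$ and thus to vanish by faithfulness. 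This yields $\mathcal{I}_k\cong\mbox{Ker}_{V_{L+\a/2-\a/p}}Q$ for every $k$.

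For the quotient, I would first observe that $z^{-1}e^{\a/2-\a/p}(z)\in\mathcal{I}$ forces $\widetilde{H(z)}\equiv e^{\a/2}(z)\pmod{\mathcal{I}}$, so that $\mathcal{V}/\mathcal{I}$ is generated as a $\mathcal{W}_{p,2}$-module by the images of $\Phi(\mathcal{W}_{p,2})$ and of $e^{\a/2}(z)$. The injective homomorphism $\Phi$ identifies the first piece with the adjoint module $\mathcal{W}_{p,2}$, while the same state-field correspondence applied to $e^{\a/2}(z)$ identifies the second with the cyclic $\mathcal{W}_{p,2}$-submodule of $V_{L+\a/2}$ generated by $e^{\a/2}$, which by \cite{AdM-2011} equals $\mbox{Ker}_{V_{L+\a/2}}Q$. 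To see that the sum is direct and exhausts the quotient I would evaluate fields on $\mathbf{1}\in V_L$: fields in $\Phi(\mathcal{W}_{p,2})$ send $\mathbf{1}$ into $V_L\oplus V_{L-\a/p}$, those generated by $e^{\a/2}(z)$ send it into $V_{L+\a/2}$, and fields in $\mathcal{I}$ send it into $V_{L+\a/2-\a/p}$. Since these are distinct summands of $V(p,2)$ and $V(p,2)$ is a faithful $\mathcal{V}$-module, the two pieces meet only inside $\mathcal{I}$, giving $\mathcal{V}/\mathcal{I}\cong\mathcal{W}_{p,2}\oplus\mbox{Ker}_{V_{L+\a/2}}Q$.

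I expect the main obstacle to be the verification that the $\Delta(e^{-\a/p},z)$-twisted $\mathcal{W}_{p,2}$-action on the modules of fields genuinely matches the untwisted action on the lattice modules $\mbox{Ker}_{V_\mu}Q$, i.e. that $\Psi_k$ (and its analogue for $e^{\a/2}(z)$) is a module map and not merely a bijection of underlying graded spaces, together with the accompanying injectivity. This is precisely where the explicit structure theory of \cite{AdM-2011} for $\mathcal{W}_{p,2}$-modules of the form $\mbox{Ker}_{V_\mu}Q$ must be invoked, so that cyclicity of each target on its lowest vector and a matching of graded dimensions upgrade surjectivity to isomorphism; once that input is granted, the remaining verifications are the routine mode computations based on (\ref{nprod}) and Lemma \ref{pom-1}.
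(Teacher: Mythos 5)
The paper itself supplies no proof of this proposition: it is stated as following ``by using results and methods developed in \cite{AdM-2011}'', so there is no argument in the text to compare yours against line by line. Judged on its own, your overall shape (cyclic generation by singular vectors plus the structure theory of the modules $\mbox{Ker}_{V_{\mu}}Q$ from \cite{AdM-2011}) is reasonable, but the central device has a genuine gap. You build the identification $\mathcal{I}_k\cong\mbox{Ker}_{V_{L+\alpha/2-\alpha/p}}Q$ from a state--field correspondence $\Psi_k$ (``evaluate the field on ${\bf 1}$'') and assert that it intertwines the two $\mathcal{W}_{p,2}$-actions ``because the module action is literally evaluation of fields.'' That is precisely the step that fails here: the standard fact that $a(z)\mapsto a_{-1}{\bf 1}$ is a module homomorphism from a local system to $W$ requires a translation operator $d$ with $[d,a(z)]=a'(z)$ and $d\,{\bf 1}=0$, and the paper explicitly remarks that no such $d$ is available in this setting (neither $L(-1)$ nor $\widetilde{L(-1)}$ works). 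Concretely, $z^{-k}e^{\alpha/2-\alpha/p}(z){\bf 1}$ is not regular at $z=0$ for $k\ge 1$, so ``the state'' must be a chosen Fourier coefficient, and compatibility of a single coefficient with all the $n$-th products (\ref{nprod}) is exactly what has to be proved, not assumed. The injectivity argument inherits the problem: $\Psi_k(a(z))=0$ says only that one coefficient of $a(z){\bf 1}$ vanishes, which is far from $a(z)$ annihilating all of $V(p,2)$, so faithfulness gives you nothing. To exclude a kernel you need either irreducibility of $\mathcal{I}_k$ or an a priori character bound matching that of $\mbox{Ker}_{V_{L+\alpha/2-\alpha/p}}Q$ --- i.e.\ precisely the quantitative input from \cite{AdM-2011} that your argument defers to but never actually deploys.

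The second assertion has a similar unproved exhaustion claim. To conclude $\mathcal{V}/\mathcal{I}\cong\mathcal{W}_{p,2}\oplus\mbox{Ker}_{V_{L+\alpha/2}}Q$ you must show that \emph{every} iterated $n$-th product of the generators of $S$ reduces, modulo $\mathcal{I}$, into $\Phi(\mathcal{W}_{p,2})+\mathcal{W}_{p,2}\cdot e^{\alpha/2}(z)$. Your coset-sorting argument (evaluating on ${\bf 1}$ and noting that the images land in distinct summands of $V(p,2)$) shows at best that the two pieces intersect trivially modulo $\mathcal{I}$; it does not show that they exhaust the quotient. For instance, products such as $e^{\alpha/2}(z)_{n}e^{\alpha/2}(z)$ produce fields of the form $Y(u,z)$ with $u\in V_{L+\alpha}\subset V_L$, which map $V_L$ into $V_L((z))$ and therefore cannot be absorbed into $\mathcal{I}$ or into $\mathcal{W}_{p,2}\cdot e^{\alpha/2}(z)$; identifying them inside $\Phi(\mathcal{W}_{p,2})$ modulo $\mathcal{I}$ is a genuine computation with the deformed fields $\widetilde{Y}(v,z)$, not a formality. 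Both gaps are in principle repairable, but only by importing the explicit module computations of \cite{AdM-2011} (irreducibility and characters of the kernels, and the action of the $\Delta(e^{-\alpha/p},z)$-twisted fields), rather than by the formal faithfulness and vacuum-evaluation arguments you propose.
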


\section{ Logarithmic $\mathcal{W}_{p,p'}$-modules}
\label{const-log}

In this section we shall construct certain logarithmic modules for  $\mathcal{W}_{p,p'}$. Our main technique will be a deformation of the vertex algebra $\mathcal{V}$ constructed in Section \ref{extended-2}.

\vskip 3mm

We start with the following simple result.

\begin{proposition} \label{vir-new}
Assume that $V$ is a vertex algebra with conformal vector $\omega$ and Virasoro field $L(z) = Y(\omega,z)$. Assume that $b \in V$ such that
$$L(n) b  = \delta_{n,0} b \ \ (n \in {\N}), \quad [b_i,b_j]=0 \ \ (i,j \in \Z). $$
Then  $$\overline{L(z)} = Y(\omega, z) + z^{-1} Y(b,z) $$
is a Virasoro field acting on any $V$--module.
\end{proposition}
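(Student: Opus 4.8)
The plan is to verify the defining Virasoro relations for $\overline{L(z)}=Y(\omega,z)+z^{-1}Y(b,z)$ directly from the commutator formula in the vertex algebra, using the hypotheses on $b$ to control the correction term. Writing $\overline{L(z)}=\sum_n\overline{L(n)}z^{-n-2}$, the relation $\overline{L(z)}=L(z)+z^{-1}Y(b,z)$ with $Y(b,z)=\sum_n b_n z^{-n-1}$ gives $\overline{L(n)}=L(n)+b_{n}$ after matching the coefficient of $z^{-n-2}$ (so that the extra piece $z^{-1}b_m z^{-m-1}=b_m z^{-m-2}$ contributes $b_n$). The goal is to show $[\overline{L(m)},\overline{L(n)}]=(m-n)\overline{L(m+n)}+\tfrac{c}{12}(m^3-m)\delta_{m+n,0}$, i.e. that adding the $b_n$'s preserves the Virasoro bracket with the same central charge.

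The key computation is to expand
\[
[\overline{L(m)},\overline{L(n)}]=[L(m),L(n)]+[L(m),b_n]+[b_m,L(n)]+[b_m,b_n].
\]
The last bracket vanishes by the assumption $[b_i,b_j]=0$. The first bracket is the ordinary Virasoro relation and supplies both $(m-n)L(m+n)$ and the full central term. So the main point is the two mixed brackets. Here I would use that $b$ is a conformal weight one primary vector: the hypothesis $L(k)b=\delta_{k,0}\,b$ for $k\in\N$ says precisely that $L(0)b=b$ (weight one) and $L(k)b=0$ for $k\ge 1$ (primary/lowest weight). The standard commutator formula then gives
\[
[L(m),b_n]=\sum_{k\ge 0}\binom{m+1}{k}(L(k-1)b)_{m+n+1-k},
\]
and since $L(k-1)b$ survives only for $k=1$ (giving $L(0)b=b$) — the term $k=0$ would involve $L(-1)b$, which contributes a total-derivative term that I will need to track — the sum collapses. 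Concretely, weight-one primality forces $[L(m),b_n]=-(n+1)\,b_{m+n}$ up to the $L(-1)b$ piece, and symmetrizing the two mixed brackets $[L(m),b_n]+[b_m,L(n)]=(m-n)\,b_{m+n}$, which is exactly $(m-n)$ times the correction $b_{m+n}$ appearing in $\overline{L(m+n)}$.

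The main obstacle I anticipate is bookkeeping the $L(-1)b=b_{-2}\vak$ contribution correctly: a priori the commutator formula produces a term proportional to $(L(-1)b)_{m+n}$, and I must confirm it either cancels between the two mixed brackets or is absorbed into the $(m-n)b_{m+n}$ term via the identity $(L(-1)b)_{n}=-n\,b_{n-1}$ (the derivative/$L(-1)$ relation $(Db)_n=-n\,b_{n-1}$). Assembling these pieces yields $[L(m),b_n]+[b_m,L(n)]=(m-n)b_{m+n}$ with no residual central contribution, so that the total bracket is $(m-n)(L(m+n)+b_{m+n})+\tfrac{c}{12}(m^3-m)\delta_{m+n,0}=(m-n)\overline{L(m+n)}+\tfrac{c}{12}(m^3-m)\delta_{m+n,0}$, proving $\overline{L(z)}$ is a Virasoro field of the same central charge. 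Finally, because all identities used are consequences of the vertex algebra axioms and hold on any module, the conclusion transfers verbatim to an arbitrary $V$-module.
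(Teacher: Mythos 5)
Your proposal is correct and follows essentially the same route as the paper: expand $[\overline{L(m)},\overline{L(n)}]$, kill $[b_m,b_n]$ by hypothesis, and observe that in the two mixed brackets the $(L(-1)b)_{m+n+1}$ contributions cancel identically while the $L(0)b=b$ terms combine to $(m-n)\,b_{m+n}$. One caution: of the two mechanisms you offer for handling the $L(-1)b$ term, only the cancellation is available here, since the identity $(L(-1)b)_n=-n\,b_{n-1}$ presupposes $L(-1)b=Db$, which the paper explicitly does not assume (see the remark immediately following the proposition); fortunately the cancellation between the two mixed brackets is exact, so no such identity is needed.
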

\begin{proof}
Let $\overline{L}(n) = L(n) + b_n$. We have:
\bea [\overline{L(n)}, \overline{L(m)} ] &=& [L(n) + b_n , L(m) + b_m] \nonumber \\
& = & [L(n), L(m)] + (n-m) (L(0) b)_{n+m} - (L(-1) b )_{n+m+1} +  (L(-1) b )_{n+m+1} \nonumber \\
& = &  (n-m) ( L(n+m) + b_{n+m}) + \frac{n^3 - n}{12} c \delta_{n+m,0}. \nonumber
\eea
The proof follows.
\end{proof}
\begin{remark}
Note that we do not require  $L(-1) b =  D b$, where $D$ is the canonical derivation on $V$. 
\end{remark}
\begin{corollary} Define
$$\overline{L(z)} = \widetilde{L(z)} + z^{-1} H(z).$$
Then $\overline{L(z)}$  is a Virasoro field acting on $V(p,p')$ and every $V(p,p')$--module.
\end{corollary}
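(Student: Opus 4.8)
The plan is to apply Proposition \ref{vir-new} directly, with the vertex algebra $V$ taken to be $\mathcal{V}$, the conformal vector $\omega$ replaced by the element of $\mathcal{V}$ whose associated field is $\widetilde{L(z)}$, and the candidate $b$ the element whose field is $H(z)$. Since Proposition \ref{vir-new} is stated for an abstract vertex algebra with conformal vector, and since $V(p,p')$ is a $\mathcal{V}$-module (as established in Section \ref{extended-2}), verifying the hypotheses inside $\mathcal{V}$ immediately yields a Virasoro field on $\mathcal{V}$ and hence on every $\mathcal{V}$-module, in particular on $V(p,p')$ and all its modules. So the entire task reduces to checking the two hypotheses of Proposition \ref{vir-new} for the pair $(\widetilde{L(z)}, H(z))$.

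First I would verify the $L(0)$-eigenvalue condition $\widetilde{L(z)}_n H(z) = \delta_{n,0} H(z)$ for $n \in \N$. The relation $\widetilde{L(z)}_1 H(z) = H(z)$ is exactly the fourth displayed identity following Lemma \ref{pom-1}, so the $n=1$ case is already in hand. For $n \ge 2$ I expect the products to vanish by a weight/degree count: $H(z)$ is built from the fields $e^{\a/p'}(z)$ and $z^{-1}e^{\a/p'-\a/p}(z)$, both of conformal weight one with respect to $\widetilde{L}$, so the higher $n$-th products must be zero. For $n=0$ I must confirm $\widetilde{L(z)}_0 H(z) = 0$; here caution is warranted, since the paper explicitly remarks that $\widetilde{L(z)}_0 H(z) \ne D H(z)$, signaling that the $0$-th product behaves anomalously. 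This is precisely why Proposition \ref{vir-new} was stated \emph{without} the assumption $L(-1)b = Db$: the vanishing of $\widetilde{L(z)}_0 H(z)$ (rather than its equality with a derivative) is what the proposition needs, and I would extract it from the computations behind Lemma \ref{pom-1}.

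Second I would check the commutativity condition $[H(z)_i, H(z)_j] = 0$ for all $i,j \in \Z$. Using the $n$-th product formula (\ref{nprod}) and the lattice structure, the commutators of the fields $e^{\a/p'}(z)$ and $e^{\a/p'-\a/p}(z)$ among themselves are governed by the inner products of the lattice vectors $\a/p'$ and $\a/p'-\a/p$. A direct computation of these inner products using $\langle \a,\a\rangle = 2pp'$ should show that the relevant pairings are nonnegative integers making the operator product expansions regular, so that the fields mutually commute and $H(z)$ is local (indeed abelian) with itself.

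The main obstacle I anticipate is the $n=0$ product $\widetilde{L(z)}_0 H(z)$, because of the warning in the text that it is not simply $DH(z)$. The subtlety is that $\widetilde{L(z)}$ is \emph{not} a genuine translation-compatible conformal field in the naive sense (the relation (\ref{derivation}) fails for $d = \widetilde{L(-1)}$), so one cannot invoke the standard identity $L_{-1} = D$ to conclude $L(0)$-primality automatically. I would therefore compute $\widetilde{L(z)}_0 H(z)$ explicitly from the third and fourth identities of Lemma \ref{pom-1}, which give $e^{\a/p'}(z)_0\widetilde{L(z)}$ and $(z^{-1}e^{\a/p'-\a/p}(z))_0\widetilde{L(z)}$; by skew-symmetry of the $0$-th product in the vertex algebra $\mathcal{V}$ (together with the action of $D$), these translate into $\widetilde{L(z)}_0 e^{\a/p'}(z)$ and $\widetilde{L(z)}_0 (z^{-1}e^{\a/p'-\a/p}(z))$, whose combination in $H(z)$ I expect to cancel precisely because of the coefficient choices defining $H(z)$ versus $\widetilde{H(z)}$. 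Confirming this cancellation is the delicate step; everything else is routine bookkeeping with (\ref{nprod}) and the already-established relations of Lemma \ref{pom-1}.
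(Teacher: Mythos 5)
Your overall strategy coincides with the paper's: apply Proposition \ref{vir-new} inside the vertex algebra $\mathcal{V}$ with conformal vector $\widetilde{L(z)}$ and $b=H(z)$, then transport the resulting Virasoro field to $V(p,p')$ and its modules. The problem is that you have misread the hypotheses of Proposition \ref{vir-new}, and the step you single out as the crux of the argument is an attempt to prove a false identity. The hypothesis $L(n)b=\delta_{n,0}b$ for $n\in\N$ is stated in terms of the Virasoro modes $L(n)=\omega_{n+1}$; in terms of $n$-th products it reads $\omega_1 b=b$ and $\omega_m b=0$ for $m\ge 2$, and it imposes \emph{no} condition on $\omega_0 b=L(-1)b$. (Your own formulation ``$\widetilde{L(z)}_n H(z)=\delta_{n,0}H(z)$'' is already inconsistent with your correct identification of $\widetilde{L(z)}_1H(z)=H(z)$ as the relevant eigenvalue relation.) The reason no condition on $L(-1)b$ is needed is visible in the displayed computation in the proof of Proposition \ref{vir-new}: the two contributions $\pm(L(-1)b)_{n+m+1}$ to $[\overline{L(n)},\overline{L(m)}]$ cancel identically, so the proposition requires neither $L(-1)b=Db$ nor $L(-1)b=0$. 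Your claim that ``the vanishing of $\widetilde{L(z)}_0H(z)$ is what the proposition needs'' is therefore wrong, and moreover that vanishing fails: skew-symmetry in $\mathcal{V}$ (which carries the derivation $D$ of Remark \ref{deri}) combined with the identities of Lemma \ref{pom-1} gives
$$\widetilde{L(z)}_0 H(z)=DH(z)-\frac{p'}{p-p'}\,z^{-1}De^{\a/p'-\a/p}(z)\ne 0,$$
which also explains the paper's remark that $\widetilde{L(z)}_0H(z)\ne DH(z)$. As written, your ``delicate step'' would end in a nonzero answer and, on your own reading of the proposition, would sink the proof; the repair is simply to delete the $n=0$ requirement.

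The remaining verifications are fine in outline: $\widetilde{L(z)}_1H(z)=H(z)$ is displayed after Lemma \ref{pom-1}, and the higher products vanish for the weight reasons you give. One caution on the commutativity $[H(z)_i,H(z)_j]=0$: deducing it from integrality of the lattice pairings does not work, since $\la\a/p',\a/p'\ra=2p/p'$ is not an integer in general. The honest reason is structural: by the definition of the vertex operator map on $V(p,p')$, the operators $e^{\a/p'}(z)$ and $z^{-1}e^{\a/p'-\a/p}(z)$ annihilate the components $V_{L+\a/p'}$ and $V_{L+\a/p'-\a/p}$, which is exactly where their images lie, so every composition of two of them vanishes and the modes commute trivially.
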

\begin{proof}
 Inside the  vertex algebra $\mathcal{V}$, we have  conformal vector $\omega= \widetilde{L(z)}$  and vector $b= H(z)$, and  the associated vertex operators
 $Y_{\mathcal V} (\widetilde{L(z)}, z_0) = \sum_{n \in \Z} \widetilde{L(z)} _n z_0 ^{-n-1}$,  $Y_{\mathcal V} (H(z), z_0) = \sum_{n \in \Z} H(z) _n z_0 ^{-n-1}$ acting on $\mathcal{V}$. We apply Proposition \ref{vir-new} in this case and get the Virasoro field
 $ Y_{\mathcal V} (\widetilde{L(z)}, z_0) + z_{0} ^{-1} Y_{\mathcal V} (H(z), z_0)$. Applying this field on $V(p,p')$-modules we get the Virasoro field $\overline{L(z)}$, as required.
\end{proof}

\begin{lemma}
$\widetilde{H(z)}_0 = 0 $ on  ${\mathcal V}$.
\end{lemma}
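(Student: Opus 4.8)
The plan is to show that the operator $\widetilde{H(z)}_0$ annihilates a spanning set of $\mathcal{V}$, and then conclude by the derivation property of $\widetilde{H(z)}_0$. Recall from Lemma \ref{pom-1} and the subsequent computations that $\widetilde{H(z)}$ is built from the screening field $e^{\a/p'}(z)$ together with the correction term $z^{-1} e^{\a/p'-\a/p}(z)$. Since $\mathcal{V} = \la S \ra$ is generated by $S = \{ e^{\a/p'}(z), \widetilde{Y}(v,z) \mid v \in \mathcal{W}_{p,p'} \}$, it suffices to understand the action of $\widetilde{H(z)}_0$ on the generating fields and to verify that this action is compatible with the $n$-th products defining $\mathcal{V}$.

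First I would compute $\widetilde{H(z)}_0$ directly on the two types of generators. On the fields $\widetilde{Y}(v,z)$ with $v \in \mathcal{W}_{p,p'}$, the key point is that $\widetilde{H(z)}$ encodes, up to the correction, the residue of the screening $e^{\a/p'}$, i.e. the operator $Q = e^{\a/p'}_0$. Because $\mathcal{W}_{p,p'} = \mbox{Ker}_{V_L} \widetilde{Q} \cap \mbox{Ker}_{V_L} Q$ lies in the kernel of $Q$, the zeroth product of $\widetilde{H(z)}$ with $\widetilde{Y}(v,z)$ should vanish. I would make this precise using formula (\ref{nprod}) for the $0$-th product, reducing $\widetilde{H(z)}_0 \widetilde{Y}(v,z)$ to an expression involving $Q v = e^{\a/p'}_0 v$ (together with the matching contribution from the correction term), which is zero by definition of $\mathcal{W}_{p,p'}$. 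On the generator $e^{\a/p'}(z)$ itself, I would check directly via (\ref{nprod}) that $\widetilde{H(z)}_0 e^{\a/p'}(z) = 0$, using that $e^{\a/p'}$ is local with itself and the explicit vertex operators in $V_{\widetilde{L}}$; the relevant lattice products $e^{\a/p'}_n e^{\a/p'}$ contribute only to higher shifted charges and the zeroth product cancels.

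Next I would promote the vanishing on generators to vanishing on all of $\mathcal{V}$. The clean way is to observe that $\widetilde{H(z)}_0$ acts as a derivation of the $n$-th products in the canonical weak vertex algebra $\mathcal{E}(V(p,p'))$: the commutator formula, in the form analogous to (\ref{com-field}), gives $[\widetilde{H(z)}_0, a(z)_n] = \sum_{i \ge 0} \binom{0}{i}(\widetilde{H(z)}_i a(z))_{n}$, but since the relevant summand involves $\widetilde{H(z)}_0$ applied to generators, this is the familiar statement that the residue of a field acts as a derivation of all products. Hence $\widetilde{H(z)}_0$ kills every element of the spanning set $\{ u^{(1)}(z)_{n_1} \cdots u^{(r)}(z)_{n_r} I_W \}$ described in the local-systems theorem, because it kills each $u^{(i)}(z) \in S$ and distributes over products. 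Applying it to the vacuum $I_W = 1_W$ also gives zero, completing the induction.

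The main obstacle I expect is the bookkeeping around the correction term $z^{-1} e^{\a/p'-\a/p}(z)$ in $\widetilde{H(z)}$ and the fact, flagged in the remarks, that $\widetilde{H(z)}_0$ need not be the naive derivation $D$ (indeed $\widetilde{L(z)}_0 H(z) \ne D H(z)$). This means the ``derivation of products'' step cannot be invoked as a black box from the standard theory; I will need to track the anomalous $z^{-1}$-prefactors carefully, verifying that the extra terms generated by the correction either cancel among themselves or land back in contributions proportional to $Q v = 0$. Establishing that $\widetilde{H(z)}_0$ genuinely acts as a derivation with respect to $Y_{\mathcal{E}}$ on $\mathcal{V}$ (rather than merely on a subalgebra) is the delicate point, and I would handle it by a direct residue computation using (\ref{nprod}) rather than appealing to an abstract lemma.
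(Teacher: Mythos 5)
Your proposal matches the paper's proof in both strategy and substance: the paper quotes the commutator identity $[\widetilde{H(z)}_0 , \widetilde{a_n} ] = \widetilde{(Q a)_n}$ from Lemma 5.1 of \cite{AdM-2009}, so that $Qa=0$ for $a \in \mathcal{W}_{p,p'}$ kills the action on the generating fields, and then concludes by exactly the spanning-set/derivation argument you describe. The residue computation you propose to carry out by hand (including the check on $e^{\a/p'}(z)$ and on the identity field) is precisely what that cited commutator formula packages, so this is the same route, not a different one.
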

\begin{proof}
As in Lemma 5.1 of \cite{AdM-2009} we see that
$$ [\widetilde{H(z)}_0 , \widetilde{a_n} ] = \widetilde{(Q a)_n}, \qquad (a \in \mathcal{W}_{p,p'} ). $$
Since $Q a = 0$, we conclude that
$$ [\widetilde{H(z)}_0 , \widetilde{Y}( a,z) ] = 0, \qquad (a \in \mathcal{W}_{p,p'} ). $$
This easily implies the proof.
\end{proof}

Therefore $\widetilde{H(z)}_n$, $n \in \Z$, define on $\mathcal{V}$ the structure of a module for the Heisenberg algebra such that $\widetilde{H(z)}_0$ acts trivially.
Therefore the field
$$ \Delta(\widetilde{H(z)},z_1) = z_1 ^ {\widetilde{H(z)}_0} \exp  \left(\sum_{n = 1} \frac{\widetilde{H(z)} _{n} }{-n} z_1 ^{-n}  \right) $$ is well defined on $\mathcal{V}$. As in \cite{AdM-2009} (cf. \cite{Li}, \cite{H}; see also \cite{FFHST}) we have the following result:

\begin{theorem} \label{const-log-new}
\item[(1)]For every $ v(z) \in {\mathcal V}$ we define
$$ \overline{Y} (v(z),z_1) = Y_{\mathcal V} (\Delta(\widetilde{H(z)},z_1) v(z),z_1). $$
Then
$({\mathcal V}, \overline{Y})$ is a ${\mathcal V}$--module.

\item[(2)] Assume that $(M,Y_M(\cdot,z_1)$ is a weak ${\mathcal V}$--module. Define the pair $(\overline{M}, \overline{Y}_{\overline{M}}(\cdot, z_1) )$ such that
\bea &&  \overline{M}= M \qquad \mbox{ as a vector space}, \nonumber \\
&& \overline{Y}_{\overline M} (v(z),z_1) = Y_{ M} (\Delta(\widetilde{H(z)},z_1) v(z),z_1). \nonumber \eea

Then $(\overline {M}, \overline{Y}_{\overline{M}}(\cdot, z_1) )$
is a weak ${\mathcal V}$--module. In particular, $(\overline {M}, \overline{Y}_{\overline{M}}(\cdot, z_1) )$ is a $\mathcal{W}_{p,p'}$--module.
\end{theorem}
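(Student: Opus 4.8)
The plan is to recognize Theorem \ref{const-log-new} as a direct instance of the $\Delta$-operator construction of \cite{Li} (applied exactly as in \cite{AdM-2009}) to the vertex algebra $\mathcal{V}$, with distinguished element $h:=\widetilde{H(z)}\in\mathcal{V}$. Part (1) is then the special case of the adjoint module $M=\mathcal{V}$ with $Y_M=Y_{\mathcal V}$, part (2) the general weak-module case, and the final assertion will follow by restriction along the embedding $\Phi$. The only genuine work is to check that $h$ satisfies the hypotheses of that construction and to record the conjugation identity on which it rests.

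First I would observe that $\mathcal{V}$ is a bona fide vertex algebra carrying the canonical derivation $D=\frac{d}{dz}$ of Remark \ref{deri}, so Li's framework is available. By the preceding lemma $\widetilde{H(z)}_0=0$ on $\mathcal{V}$, so the factor $z_1^{\widetilde{H(z)}_0}$ is the identity and
$$\Delta(\widetilde{H(z)},z_1)=\exp\left(\sum_{n=1}^{\infty}\frac{\widetilde{H(z)}_n}{-n}\,z_1^{-n}\right);$$
in particular no semisimplicity or integrality of $h_0$ need be invoked, which would otherwise be the delicate point. The positive modes $\widetilde{H(z)}_n$ ($n\geq1$) pairwise commute, since the Heisenberg relation forces $[\widetilde{H(z)}_m,\widetilde{H(z)}_n]$ to be proportional to $\delta_{m+n,0}$, which vanishes for $m,n\geq1$. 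Finally, the lower-truncation property of the vertex-algebra modes together with the lower-boundedness of the relevant grading shows that $\widetilde{H(z)}_n$ acts locally nilpotently, so $\Delta(\widetilde{H(z)},z_1)$ applied to any fixed vector is a finite expression in $z_1^{-1}$; hence it is well defined and invertible.

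The heart of the matter is the conjugation identity of \cite{Li}, which expresses the deformed operator through $\Delta$ and from which the Jacobi identity for $\overline{Y}$ is deduced; concretely one verifies that $u(z)\mapsto Y_{\mathcal V}(\Delta(\widetilde{H(z)},z_1)u(z),z_1)$ satisfies weak associativity and commutativity. I expect this to be the main obstacle: one must track the $z_1$-dependence of $\Delta$ through the iterated $n$-th products and confirm that every potential obstruction term either cancels or vanishes. This is precisely where the two facts established above enter — the commutativity $[\widetilde{H(z)}_m,\widetilde{H(z)}_n]=0$ for $m,n\geq1$ and the vanishing $\widetilde{H(z)}_0=0$ — so that the computation collapses onto the established relations among $\widetilde{H(z)}$, $\widetilde{L(z)}$ and $H(z)$, namely the commutator formula (\ref{com-field}) and $\widetilde{H(z)}_1\widetilde{L(z)}=H(z)$. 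Since these relations have the same form as those used in \cite{AdM-2009}, the verification goes through \emph{mutatis mutandis}.

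With the hypotheses in hand, the general theorem yields part (1) by taking $M=\mathcal{V}$ and $Y_M=Y_{\mathcal V}$, and part (2) by taking an arbitrary weak $\mathcal{V}$-module $(M,Y_M)$, giving the new structure $\overline{Y}_{\overline M}(v(z),z_1)=Y_M(\Delta(\widetilde{H(z)},z_1)v(z),z_1)$. The last assertion is then immediate: $\Phi:\mathcal{W}_{p,p'}\to\mathcal{V}$ is a vertex-algebra homomorphism, so restriction along $\Phi$ turns any weak $\mathcal{V}$-module, in particular $\overline{M}$, into a $\mathcal{W}_{p,p'}$-module.
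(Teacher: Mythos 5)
Your proposal is correct and follows essentially the same route as the paper, which states the theorem as a direct application of the $\Delta$-operator construction of \cite{Li} and \cite{AdM-2009} after noting that the modes $\widetilde{H(z)}_n$ form a Heisenberg algebra with $\widetilde{H(z)}_0$ acting trivially, so that $\Delta(\widetilde{H(z)},z_1)$ is well defined. Your additional checks (commutativity of the positive modes, truncation giving finiteness of $\Delta$ applied to a vector, and restriction along $\Phi$ for the last assertion) are exactly the points the paper leaves implicit.
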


Recall that $V(p,p')$ is a module for the vertex algebra ${\mathcal V}$ with the vertex operator map
$$ Y (v(z), z_0 ) = v(z_0), \qquad v(z) \in {\mathcal V}.$$
Applying the above construction we get a new explicit  realization of logarithmic modules for $\mathcal{W}_{p,p'}$ of $L(0)$ nilpotent  rank $3$.
 \begin{theorem}
 $ (\overline{V(p,p')}, \overline{Y})$ is a $\mathcal{W}_{p,p'}$--module  such that
 $$\overline{Y} (\omega, z) = \overline{L}(z).$$
 Operator $\overline{L(0)}$ acts on $\overline{V(p,p')}$ as
 \begin{center} \shadowbox{
 $ \overline{L(0)} = L(0) + Q + \widetilde{Q} + e ^{\a /p' - \a /p}_{-1}$
 }
 \end{center}
 and it has $L(0)$-nilpotent rank $3$.
 \end{theorem}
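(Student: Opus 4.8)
The plan is to verify the stated formula for $\overline{L(0)}$ by extracting the zero-mode of the Virasoro field $\overline{L(z)}$ through the deformation construction of Theorem \ref{const-log-new}, and then to exhibit a size-three Jordan block together with the vanishing of the cube of the nilpotent part. First I would recall that on $\overline{V(p,p')}$ the conformal vector acts via $\overline{Y}(\omega,z) = Y_{V(p,p')}(\Delta(\widetilde{H(z)},z)\,\widetilde{Y}(\omega,z),z)$, so I must expand $\Delta(\widetilde{H(z)},z_1)\widetilde{L(z)}$ and collect the coefficient of $z_1^{-2}$ in the resulting field acting on $V(p,p')$. Using the relations from Lemma \ref{pom-1} packaged as $\widetilde{H(z)}_0\widetilde{L(z)}=0$, $\widetilde{H(z)}_1\widetilde{L(z)}=H(z)$, and $\widetilde{H(z)}_n\widetilde{L(z)}=0$ for $n\ge 2$, the exponential $\exp\!\big(\sum_{n\ge 1}\frac{\widetilde{H(z)}_n}{-n}z_1^{-n}\big)$ truncates after the linear term when applied to $\widetilde{L(z)}$, producing $\Delta(\widetilde{H(z)},z_1)\widetilde{L(z)} = \widetilde{L(z)} - z_1^{-1}H(z)$ (up to the normalization of the $z_1^{\widetilde{H(z)}_0}$ factor, which is trivial since $\widetilde{H(z)}_0=0$). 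Feeding this into $\overline{Y}(\omega,z_1)=Y_{V(p,p')}(\widetilde{L(z)}-z_1^{-1}H(z),z_1)$ and reading off the degree-$(-2)$ coefficient recovers precisely $\overline{L(z)}=\widetilde{L(z)}+z_1^{-1}H(z)$ as in the Corollary, so the zero mode is $\overline{L(0)} = \widetilde{L(0)} + (\text{zero mode of the }z_1^{-1}H(z)\text{ correction})$.

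Next I would translate each operator on the right side into its action on $V(p,p')$ via the identification $Y_W(a(z),z_0)=a(z_0)$ from (\ref{mod-W}). The term $\widetilde{L(0)}$ acts as the zero mode of $\widetilde{L(z)}=L(z)+z^{-1}e^{-\a/p}(z)$; expanding the screening field $z^{-1}e^{-\a/p}(z)$ and extracting the appropriate mode yields exactly the contribution $\widetilde{Q}=e^{-\a/p}_0$, so $\widetilde{L(0)}=L(0)+\widetilde{Q}$. Similarly, the deformation correction coming from $H(z)=e^{\a/p'}(z)+z^{-1}e^{\a/p'-\a/p}(z)$ contributes the zero modes $Q=e^{\a/p'}_0$ and $e^{\a/p'-\a/p}_{-1}$. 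Assembling these gives the boxed identity $\overline{L(0)}=L(0)+Q+\widetilde{Q}+e^{\a/p'-\a/p}_{-1}$. I would carry out this bookkeeping carefully, tracking how the $z^{-1}$ prefactors shift the mode index so that the residues land on precisely $Q$, $\widetilde{Q}$, and $e^{\a/p'-\a/p}_{-1}$ rather than neighboring modes.

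For the rank-three claim, I would decompose $\overline{L(0)}=L_{ss}(0)+N$ where $L_{ss}(0)=L(0)$ is the diagonalizable piece acting on the four summands of $V(p,p')$, and $N=Q+\widetilde{Q}+e^{\a/p'-\a/p}_{-1}$ is the off-diagonal nilpotent part. The three operators $Q$, $\widetilde{Q}$, and $e^{\a/p'-\a/p}_{-1}$ shift the lattice coset grading as $L\mapsto L+\a/p'$, $L\mapsto L-\a/p$, and $L\mapsto L+\a/p'-\a/p$ respectively, each strictly lowering $L(0)$-weight by a fixed positive amount while moving between the four direct summands $V_L$, $V_{L-\a/p}$, $V_{L+\a/p'}$, $V_{L+\a/p'-\a/p}$. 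I would show that $N$ is triangular with respect to the filtration induced by this coset grading and that a single application of $N$ connects adjacent levels, so $N^3=0$ follows from there being exactly four coset layers arranged in a chain of length three; the displayed form makes the maximal composite $\widetilde{Q}\,Q$ (equivalently the $e^{\a/p'-\a/p}_{-1}$ channel) the top of the chain. To confirm $N^2\neq 0$, hence genuine rank three, I would produce an explicit vector in $V_L$—for instance the vacuum or the conformal vector—on which $\widetilde{Q}\,Q$ (or the combined nilpotent) acts nontrivially, landing in $V_{L+\a/p'-\a/p}$.

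The main obstacle I anticipate is the last point: proving $N^2\neq 0$ rather than merely $N^3=0$. Nilpotency of $N$ is essentially automatic from the four-layer coset grading and the weight-lowering property, but showing that a length-two composition of screenings genuinely survives requires producing a concrete nonzero image, which depends on the nonvanishing of the relevant screening pairings on $V(p,p')$. I would address this by exploiting that $Q$ and $\widetilde{Q}$ do not annihilate the full lattice modules $V_{L\pm\a/p'}$, $V_{L-\a/p}$ (only their intersection defines $\mathcal{W}_{p,p'}$), together with the explicit nonzero commutator relation (\ref{com-field}), $[\widetilde{L(z)}_{n+1},\widetilde{H(z)}_m]=H(z)_{m+n}$, which already witnesses that the Heisenberg-type deformation acts nontrivially. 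This relation guarantees $H(z)\neq 0$ in $\mathcal{V}$, and tracing its action back to $V(p,p')$ supplies the needed nonzero second-order term, closing the argument.
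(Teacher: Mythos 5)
Your derivation of the boxed formula for $\overline{L(0)}$ follows the paper's route exactly (expand $\Delta(\widetilde{H(z)},z_1)\widetilde{L(z)}$ using $\widetilde{H(z)}_0\widetilde{L(z)}=0$, $\widetilde{H(z)}_1\widetilde{L(z)}=H(z)$, $\widetilde{H(z)}_n\widetilde{L(z)}=0$ for $n\ge 2$, then read off modes), and your structural argument that $N=Q+\widetilde{Q}+e^{\a/p'-\a/p}_{-1}$ is triangular with respect to the four-coset decomposition, whence $N^3=0$, is sound and matches what the paper does implicitly. (One small correction: these three operators do not ``strictly lower the $L(0)$-weight''; $e^{\a/p'}$, $e^{-\a/p}$ have conformal weight $1$ and $e^{\a/p'-\a/p}$ has weight $0$, so all three zero/``$-1$'' modes are weight-preserving. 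That is in fact what you need in order to claim $L(0)=\overline{L(0)}_{ss}$, i.e.\ $[L(0),N]=0$.)

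The genuine gap is in your witness for $N^2\neq 0$. On $V_L$ one has $N^2=Q\widetilde{Q}+\widetilde{Q}Q$ (the $e^{\a/p'-\a/p}_{-1}$ channel contributes nothing to $N^2$, since it already lands in the top coset $V_{L+\a/p'-\a/p}$, which $N$ annihilates). Both of your proposed test vectors are killed by this operator: the vacuum satisfies $Q{\bf 1}=\widetilde{Q}{\bf 1}=0$ by the creation property, and the conformal vector satisfies $Q\omega=\widetilde{Q}\omega=0$ by the very definition $\mathcal{W}_{p,p'}=\mbox{Ker}_{V_L}Q\cap\mbox{Ker}_{V_L}\widetilde{Q}$ — indeed no vector of $\mathcal{W}_{p,p'}$ can work. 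Your fallback via the relation $[\widetilde{L(z)}_{n+1},\widetilde{H(z)}_m]=H(z)_{m+n}$ only certifies that $H(z)\neq 0$, i.e.\ first-order non-vanishing $N\neq 0$; it says nothing about the survival of the length-two composite. The paper's proof supplies the missing ingredient concretely: it first notes that the weight-$0$ subspace (spanned by ${\bf 1}$ and $e^{\a/p-\a/p'}$) is too small to carry a rank-$3$ block, then works in the four-dimensional weight-$1$ subspace and takes the Heisenberg generator $\alpha(-1){\bf 1}$ — which lies in $V_L$ but not in $\mathcal{W}_{p,p'}$ — computing $Q\alpha(-1){\bf 1}=-2p\,e^{\a/p'}$, $\widetilde{Q}\alpha(-1){\bf 1}=2p'e^{-\a/p}$, and then $(\overline{L(0)}-L(0))^2\alpha(-1){\bf 1}=-4\,\alpha(-1)e^{\a/p-\a/p'}\neq 0$. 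Without an explicit computation of this kind on a vector outside $\mbox{Ker}\,Q\cap\mbox{Ker}\,\widetilde{Q}$, the rank-$3$ claim is not established.
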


\begin{proof}
To prove the formula for $\overline{L(0)}$,
we simply use
$$ \overline{Y}_{\overline M} (v(z),z_1) = Y_{ M} (\Delta(\widetilde{H(z)},z_1) v(z),z_1),$$
and apply the formulas $\widetilde{H(z)}_0 \widetilde{L(z)} = 0,$ $\widetilde{H(z)}_1 \widetilde{L(z)} = H(z)$, and
$\widetilde{H(z)}_n \widetilde{L(z)} = 0$, $ n \ge 2$ proven earlier.

From the definition of $ \overline{V(p,p')}$ and of $\omega$, we easily see that
the (generalized) eigenspace  $\overline{V(p,p')}_0$ is $2$-dimensional spanned by ${\bf 1}$ and $e ^{\a /p - \a /p'}$ so Jordan block
of length $3$ does not appear on the lowest weight subspace.
Since $L(0)$ acts as identity on the top component of $V_{L-\alpha/p}$, and the same way on top subspace of $V_{L+\alpha/p'}$, we first analyze
$\overline{V(p,p')}_1$, which is four-dimensional spanned by $\alpha(-1){\bf 1}$, $e^{-\alpha/p}$, $e^{\alpha/p'}$ and $e ^{\a /p' - \a /p}_{-1}{\bf 1}$.
We compute
\bea
&& \overline{L(0)} \cdot \alpha(-1){\bf 1}=\alpha(-1){\bf 1}+(Q+\tilde{Q})\alpha(-1){\bf 1}+ e ^{\a /p - \a /p'}_{-1} \alpha(-1) {\bf 1}, \nonumber \\
&& = \alpha(-1){\bf 1}+2p' e^{-\alpha/p}-2p e^{\alpha/p'}+\alpha(2p-2p')(\frac{1}{p}-\frac{1}{p'})\alpha(-1)e ^{\a /p - \a /p'}, \nonumber \\
&& \overline{L(0)} \cdot e^{-\a/p}=e^{-\a/p}-\frac{\alpha(-1)}{p'} e ^{\a /p - \a /p'}, \nonumber \\
&& \overline{L(0)} \cdot e^{\a/p'}=e^{\a/p'}+\frac{\alpha(-1)}{p} e ^{\a /p - \a /p'}, \nonumber \\
&& \overline{L(0)} \cdot \alpha(-1) e ^{\a /p - \a /p'}=\alpha(-1) e ^{\a /p - \a /p'}. \nonumber
\eea
Observe that $L(0)=\overline{L(0)}_{ss}$. Consequently,
$$(\overline{L(0)}-{L(0)})^2  \alpha (-1) =-4 \alpha(-1) e ^{\a /p - \a /p'} \neq 0.$$
It is easy to see that $(L(0)-\overline{L(0)})^3=0$. But  $L(0)$ is the semisimple part of
$\overline{L(0)}$, so the assertion follows.
\end{proof}

In Proposition \ref{another-log} we constructed a $V(p,p')$-module, which we denoted by $MV(p,p')$. By using the above method we obtain
a $\mathcal{W}_{p,p'}$-module structure on $\overline{MV(p,p')}$.

\begin{theorem} \label{main2} The module $\overline{MV(p,p')}$ is of $L(0)$-nilpotent rank $3$.
\end{theorem}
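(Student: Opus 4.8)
The plan is to mirror the computation used for $\overline{V(p,p')}$, now carried out on the module $MV(p,p')$ whose four summands are the $\alpha/2$-shifts of the summands of $V(p,p')$. Since the deformed vertex operator $\overline{Y}_{\overline{M}}(\cdot,z_1)=Y_M(\Delta(\widetilde{H(z)},z_1)\cdot,z_1)$ from Theorem \ref{const-log-new}(2) applies verbatim to any weak $\mathcal{V}$-module, and $MV(p,p')$ is such a module by Proposition \ref{another-log}, the conformal vector $\omega$ again acts through $\overline{L}(z)=\widetilde{L(z)}+z^{-1}H(z)$. So the first step is to record that $\overline{L(0)}$ on $\overline{MV(p,p')}$ has exactly the same operator expression $L(0)+Q+\widetilde{Q}+e^{\alpha/p'-\alpha/p}_{-1}$ as in the previous theorem, with $L(0)=\overline{L(0)}_{ss}$; this is immediate from the $\widetilde{H(z)}_n\widetilde{L(z)}$ formulas in Lemma \ref{pom-1} and does not depend on the $\alpha/2$-shift.

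Next I would locate a vector in $\overline{MV(p,p')}$ that generates a genuine length-three Jordan block. The natural candidate is the image of $\alpha(-1)$ times the top vector of one of the summands, by analogy with $\alpha(-1)\mathbf{1}$ in the $V(p,p')$ computation. Concretely, I would take the lowest-weight vector $e^{\alpha/2}$ of $V_{L+\alpha/2}$ together with $\alpha(-1)e^{\alpha/2}$, and compute the action of the three operators $Q=e^{\alpha/p'}_0$, $\widetilde{Q}=e^{-\alpha/p}_0$, and $e^{\alpha/p'-\alpha/p}_{-1}$ on these states. Each of these shifts the lattice coset by $\pm\alpha/p'$, $\mp\alpha/p$, or $\alpha/p'-\alpha/p$, moving between the four summands of $MV(p,p')$; the key is that applying $(Q+\widetilde{Q})$ and then the composite again lands on a nonzero multiple of $\alpha(-1)e^{\alpha/2+\alpha/p'-\alpha/p}$, exactly as $\alpha(-1)\mathbf{1}\mapsto\cdots\mapsto -4\alpha(-1)e^{\alpha/p-\alpha/p'}$ did before. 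I would then verify $(\overline{L(0)}-L(0))^2\alpha(-1)e^{\alpha/2}\neq 0$ while $(\overline{L(0)}-L(0))^3=0$, establishing nilpotent rank exactly three.

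The main obstacle I anticipate is bookkeeping the nonzero coefficients in the chain of applications. The $\alpha/2$-shift changes the $\alpha(0)$-eigenvalues on each summand, and hence alters the constants produced when $Q$ and $\widetilde{Q}$ act (these constants come from contour integrals of the form $\langle\alpha/2+\lambda,\alpha/p'\rangle$ and similar pairings), so one cannot simply copy the numerical coefficients from the $V(p,p')$ case. I must check that none of the relevant pairings degenerates to zero for coprime $p,p'\geq 2$, which would collapse the block; coprimality should guarantee the shifts stay in generic position. A secondary point is confirming that no Jordan block of length three appears accidentally on the very lowest graded piece (as was ruled out for $V(p,p')$), so that the rank is attained first at the level of the $\alpha(-1)$-descendants; this requires identifying the lowest generalized $\overline{L(0)}$-eigenspace and checking $(\overline{L(0)}-L(0))$ is already nontrivial but only quadratically nilpotent there. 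Once the single explicit vector $\alpha(-1)e^{\alpha/2}$ is shown to survive two applications and die after three, the theorem follows exactly as in the proof of the preceding theorem.
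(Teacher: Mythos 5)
Your general framework is right (the deformed module structure, the formula $\overline{L(0)}=L(0)+Q+\widetilde{Q}+e^{\alpha/p'-\alpha/p}_{-1}$, and the fact that only the cross terms survive in the square), but the specific vector you propose does not work, and the failure is exactly at the point you flagged as ``bookkeeping.'' Since $Q$ and $\widetilde{Q}$ commute and all other quadratic terms leave the module, on the summand $V_{L+\alpha/2}$ one has $(\overline{L(0)}-L(0))^2=2\widetilde{Q}Q$ (projected to $V_{L+\alpha/2+\alpha/p'-\alpha/p}$). Now $\langle \alpha/p',\alpha/2\rangle=p\geq 2$, so $Y(e^{\alpha/p'},z)u$ has $z$-powers bounded below by $p-\mathrm{depth}(u)$ for $u\in M(1)\otimes e^{\alpha/2}$; hence $Q$ annihilates \emph{everything} of depth $\le p$ over $e^{\alpha/2}$, in particular $Q\,\alpha(-1)e^{\alpha/2}=0$, and therefore $(\overline{L(0)}-L(0))^2\alpha(-1)e^{\alpha/2}=2\widetilde{Q}Q\,\alpha(-1)e^{\alpha/2}=0$. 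This is the essential difference from the $V(p,p')$ computation: there $\langle\alpha/p',0\rangle=\langle-\alpha/p,0\rangle=0$, so both screenings already act nontrivially on the depth-one vector $\alpha(-1)\mathbf{1}$; the $\alpha/2$-shift destroys this, and no low-depth descendant of $e^{\alpha/2}$ can serve as a witness.

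The missing idea, which is the actual content of the paper's proof, is to go up to depth $p+1$ (conformal weight $\frac{(p+2)(p'+2)}{4}$) and use the structure theory of Feigin--Fuchs modules to produce a \emph{subsingular} vector $w\in M(1)\otimes e^{\alpha/2}$ with $Qw=e^{\alpha/2+\alpha/p'}$. Then $(\overline{L(0)}-L(0))^2w=2\widetilde{Q}Qw=2e^{-\alpha/p}_0e^{\alpha/2+\alpha/p'}$, which is nonzero because $\langle-\alpha/p,\alpha/2+\alpha/p'\rangle=-p'-2<-1$ forces a nonvanishing residue. The existence of such a $w$ is not a routine coefficient check but an input from Virasoro representation theory, so your proposal as written has a genuine gap: it would prove only $(\overline{L(0)}-L(0))^2=0$ on the vectors you consider, and you would need to replace your candidate by the subsingular vector (or some other vector of depth at least $p+1$ not annihilated by $Q$) to recover the theorem.
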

\begin{proof}
By using the theory of Feigin-Fuchs modules (cf. \cite{FF}, \cite{FGST2}, \cite{IK}) we see that there is a subsingular vector $w$ in
$M(1) \otimes e^{\alpha /2}$ such that
$$ Q   w = e ^{ \alpha /2 + \alpha /p'}, \quad L(0) w = \frac{(p+2)(p'+2)}{4} w. $$
Then
$$(\overline{L(0)}-{L(0)})^2 w = 2 \   \widetilde{Q} Q  w = 2 e ^{-\alpha /p} _0 e ^{\alpha /2 + \alpha /p'} \ne 0. $$
The proof follows.
\end{proof}

\section{Construction of  certain intertwining operators}
\label{const-int-sect}

So far  we actually  constructed intertwining operators of types
$$ { \overline{V(p,p')}   \choose \mathcal{V} \ \ \overline{V(p,p') } } \qquad \mbox{and} \qquad { \overline{MV(p,p')}  \choose \mathcal{V} \ \  \overline{MV(p,p')} } . $$
Now we shall construct  intertwining operators acting between $\overline{V(p,p') }$ and   $\overline{MV(p,p')}$.
%

 The vertex algebra $\mathcal{V}$ can be treated as a vertex algebra of fields acting on $ V(p,p') \oplus MV(p,p')$.  Let $\mathcal{W}$ be the maximal local subspace of fields acting on $V(p,p') \bigoplus MV(p,p')$ containing $\mathcal{V}$.
Then $\mathcal{W}$ is a vertex algebra which can be treated as a $\mathcal{V}$--module.

We now take intertwining operator $\mathcal{Y}$ of type
$${ MV(p,p') \ \choose MV(p,p') \ \ \ V(p,p') }$$
in the category of $V(p,p')$--modules (existence of such intertwining operator easily follows from  the considerations  in Section \ref{extended-1}).

Consider the  $\mathcal{W}_{p,p'}$--module

$$M = \mbox{Ker}_{V_{L+\alpha /2} } Q \cap \mbox{Ker}_{ V_{ L + \alpha /2} } \widetilde{Q} \subset V_{ L + \alpha /2}. $$

Applying the operator $\Delta(e ^{-\alpha /p},z)$ we get the intertwining operator
$$ \widetilde{\mathcal{Y}} (v,z) = \mathcal{Y}(\Delta(e ^{-\alpha /p},z) v,z) $$
of type
$$ { MV(p,p') \ \choose M \ \ \ V(p,p') } $$
in the category of $\mathcal{W}_{p,p'}$--module.

Next, we assume that   $\widetilde{\mathcal{Y}} (v,z) $ acts trivially on $MV(p,p')$, so $\widetilde{\mathcal{Y}} (v,z) $ can be considered as field on $V(p,p') \bigoplus MV(p,p')$.  One can see the following important lemma.

\begin{lemma} The fields
$$\{  \widetilde{\mathcal{Y}} (v,z) ,  \ v \in M \} \cup  S$$
are mutually local on $V(p,p') \bigoplus MV(p,p')$.
\end{lemma}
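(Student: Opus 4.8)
The plan is to prove mutual locality by checking it pairwise among the three types of fields involved. The set $S$ consists of fields of the form $e^{\a/p'}(z)$ and $\widetilde{Y}(v,z)$ for $v \in \mathcal{W}_{p,p'}$, which are already known to be mutually local (this is the content of the earlier Proposition asserting that $S$ is a local subset on $V(p,p')$). So the genuinely new content is establishing (a) that each $\widetilde{\mathcal{Y}}(v,z)$ with $v \in M$ is local with respect to every field in $S$, and (b) that the fields $\widetilde{\mathcal{Y}}(v,z)$ are mutually local among themselves. Since $S$-locality is already in hand, the proof reduces to (a) and (b).

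For step (a), the key observation is that all of these operators are built, via the $\Delta$-shift construction, from genuine (generalized) vertex operators on the lattice vertex algebra $V_{\widetilde{L}}$ and its modules. Both $\widetilde{Y}(v,z) = Y(\Delta(e^{-\a/p},z)v,z)$ and $\widetilde{\mathcal{Y}}(v,z) = \mathcal{Y}(\Delta(e^{-\a/p},z)v,z)$ are obtained by applying the same shift operator $\Delta(e^{-\a/p},z)$ to underlying intertwining/vertex operators that live inside the generalized vertex algebra structure on $V_{\widetilde{L}}$. Intertwining operators among modules for a lattice vertex algebra satisfy a generalized commutativity, and the powers of $z$ that appear are controlled by the pairing $\la\a,\a\ra = 2pp'$ together with the fractional shifts $\a/p$, $\a/p'$, $\a/2$ labeling the various summands. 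I would invoke the results of \cite{Li} and \cite{AdM-2009} — exactly the machinery already cited to prove that $S$ itself is local — to conclude that applying $\Delta(e^{-\a/p},z)$ preserves locality, so each $\widetilde{\mathcal{Y}}(v,z)$ is local with the fields in $S$ for the same reason $\widetilde{Y}(v,z)$ is. The crucial point is that $\widetilde{\mathcal{Y}}(v,z)$ is assumed to act trivially on $MV(p,p')$ and maps $V(p,p')$ into $MV(p,p')$; this "nilpotent" block structure means the relevant commutators involve only one nonzero composition order, which typically makes locality easier rather than harder.

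For step (b), mutual locality of the $\widetilde{\mathcal{Y}}(v,z)$ among themselves is essentially automatic from this same triviality: because each such field sends $V(p,p')$ into $MV(p,p')$ and kills $MV(p,p')$, any composition $\widetilde{\mathcal{Y}}(v,z_1)\widetilde{\mathcal{Y}}(w,z_2)$ vanishes identically on $V(p,p') \oplus MV(p,p')$, so both orderings are zero and the locality relation holds trivially with $k=0$. The main technical burden, and what I expect to be the principal obstacle, is step (a): one must verify that the fractional powers of $(z_1-z_2)$ arising from the lattice pairings are cleared by a single common integer exponent $k$, uniformly over all $v \in M$ and all generators of $S$. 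This amounts to bounding the singularity order of the operator product expansions, which follows from the integrality of the relevant conformal weights (the summands of $V(p,p')$ and $MV(p,p')$ were arranged precisely so that the intertwining operators carry integral powers of $z$, as emphasized throughout Section \ref{extended-1}). Once that integrality is confirmed the existence of a uniform locality exponent follows, and combining (a), (b), and the prior locality of $S$ completes the proof.
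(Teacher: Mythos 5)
The paper itself offers no proof of this lemma --- it is introduced only with ``One can see the following important lemma'' --- so there is no argument of the authors to compare against; judged on its own, your overall strategy (pairwise locality: $S$ with $S$, $S$ with the new fields, new fields with each other) is the natural and surely intended one, and your step (b) is exactly right: since each $\widetilde{\mathcal{Y}}(v,z)$ kills $MV(p,p')$ and maps $V(p,p')$ into $MV(p,p')$, both orderings of a product of two such fields vanish and locality is trivial. Two points need repair, however. First, your assertion in step (a) that the block structure leaves ``only one nonzero composition order'' is false for the mixed pairs: for $a(z_1)\in S$ and $\widetilde{\mathcal{Y}}(v,z_2)$ acting on $V(p,p')$, the product $a(z_1)\widetilde{\mathcal{Y}}(v,z_2)$ involves $a$ acting on $MV(p,p')$ through its module structure, while $\widetilde{\mathcal{Y}}(v,z_2)a(z_1)$ involves $a$ acting on $V(p,p')$, and both are nonzero in general. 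Locality here is precisely the generalized commutativity between the (deformed) vertex operators and the intertwining operator $\mathcal{Y}$, transported through the common deformation $\Delta(e^{-\a/p},z)$ via the results of \cite{Li} and \cite{AdM-2009}, with integer powers of $(z_1-z_2)$ guaranteed by the integrality of the pairings such as $\la \a/p', \a/2\ra = p$ and $\la \a/p, \a/2 \ra = p'$. You do ultimately identify this as the real content, so the argument survives, but the ``easier because one order vanishes'' remark should be deleted: only the pairs of two new fields enjoy that vanishing. Second, the earlier Proposition asserts that $S$ is local only as a set of operators on $V(p,p')$; locality is a property of fields on a fixed space, so you still owe a sentence that the same computations give mutual locality of $S$ acting diagonally on $V(p,p')\oplus MV(p,p')$ (equivalently, that $MV(p,p')$ carries the corresponding $\mathcal{V}$-module structure). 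A final minor point: a locality exponent $k$ is only required per pair of fields, not uniformly over all $v\in M$ and all generators of $S$, so the uniformity you worry about at the end is not actually needed. With these repairs the proof is complete.
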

Therefore, the fields $\widetilde{\mathcal{Y}} (v,z)$, $ v \in M$,   belong to the vertex algebra $\mathcal{W}$.  They generate the following
   $\mathcal{V}$--module:
$$ \mathcal{MV} := \mathcal{V} . M = \mbox{span}_{\C} \{ a(z) _n   \widetilde{\mathcal{Y}} (v,z) \ \vert \ a(z) \in {\mathcal V}, \ v \in M,  n \in {\Z} \}. $$
Define the intertwining operator $ \mathcal{Y}_1 (\cdot, z)$ by
$$ \mathcal{Y}_1 ( u(z), z_0) = u(z_0), \quad u(z) \in \mathcal{MV}.$$
One can easily see that
$\mathcal{Y}_1(\cdot, z)$ is an intertwining operator of type
$$ { MV(p,p') \choose \mathcal{MV} \ \ V(p,p') }$$
in the category of $\mathcal{V}$--modules.

   Applying the operator $\Delta(\widetilde{H(z)},z_0)$, we get intertwining operator
 $$ \widetilde{\mathcal Y}_1(u(z), z_0) = {\mathcal Y}_1( \Delta(\widetilde{H(z)}, z_0)  u(z), z_0)$$
of type
 \bea  && { \overline{MV(p,p')} \choose \mathcal{MV} \ \ \overline{V(p,p')} }. \label{int-v-1} \eea
 (Note that $\widetilde{H(z)}_0$ acts trivially on $\mathcal{MV}$.)

By considering (\ref{int-v-1}) as intertwining operator in the category of $\mathcal{W}_{p,p'}$--modules, we get:

\begin{theorem}  There is an non-zero intertwining operator  of type
$$ { \overline{MV(p,p')} \choose M  \ \ \overline{V(p,p')} }$$
in the category of $\mathcal{W}_{p,p'}$--modules.
\end{theorem}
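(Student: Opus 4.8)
The plan is to assemble the desired intertwining operator from the chain of constructions already in place, and then to argue that it is nonzero. First I would recall that by the preceding discussion we have the intertwining operator $\mathcal{Y}_1$ of type ${ MV(p,p') \choose \mathcal{MV} \ \ V(p,p') }$ in the category of $\mathcal{V}$--modules, defined by $\mathcal{Y}_1(u(z),z_0)=u(z_0)$, and its deformation $\widetilde{\mathcal Y}_1(u(z),z_0)=\mathcal{Y}_1(\Delta(\widetilde{H(z)},z_0)u(z),z_0)$ of type ${ \overline{MV(p,p')} \choose \mathcal{MV} \ \ \overline{V(p,p')} }$, which is exactly (\ref{int-v-1}). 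The key observation is that $M$ sits inside $\mathcal{MV}$ via $v\mapsto \widetilde{\mathcal{Y}}(v,z)$, so restricting $\widetilde{\mathcal Y}_1$ to this copy of $M$ and remembering that $\mathcal{W}_{p,p'}$ embeds into $\mathcal{V}$ through $\Phi$ yields an operator of the claimed type ${ \overline{MV(p,p')} \choose M \ \ \overline{V(p,p')} }$ in the category of $\mathcal{W}_{p,p'}$--modules. So the first step is purely bookkeeping: track how the $\mathcal{V}$--module intertwining operator (\ref{int-v-1}) descends to a $\mathcal{W}_{p,p'}$--module intertwining operator under the restriction along $\Phi$ and the inclusion $M\hookrightarrow\mathcal{MV}$.

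The substantive part is nonvanishing. Here my plan is to exhibit a single pair of vectors on which the operator acts nontrivially, using the explicit lattice realization. Because $\widetilde{\mathcal Y}_1(u(z),z_0)=\mathcal{Y}_1(\Delta(\widetilde{H(z)},z_0)u(z),z_0)$ and $\Delta(\widetilde{H(z)},z_0)$ is an invertible operator (its leading term is the identity, since $\widetilde{H(z)}_0=0$ on $\mathcal{MV}$), the deformation cannot annihilate anything that $\mathcal{Y}_1$ does not already annihilate. Thus it suffices to check that $\mathcal{Y}_1$, restricted to the image of $M$ inside $\mathcal{MV}$, is nonzero. This in turn reduces, via $\mathcal{Y}_1(u(z),z_0)=u(z_0)$ and the defining relation $\widetilde{\mathcal{Y}}(v,z)=\mathcal{Y}(\Delta(e^{-\alpha/p},z)v,z)$, to the statement that the underlying $V(p,p')$--module intertwining operator $\mathcal{Y}$ of type ${ MV(p,p') \choose MV(p,p') \ \ V(p,p') }$ does not vanish identically when its first slot is fed a vector from $M\subset V_{L+\alpha/2}$. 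Since $\mathcal{Y}$ is built from the generalized vertex operator map $Y_{V_{\widetilde L}}$ on the lattice $V_{\widetilde L}$, and the fusion spaces among the simple $V_L$--modules $V_{L+\lambda}$ are one-dimensional and nonzero (as already used in Section \ref{extended-1}), I would pin down a lowest-weight vector $v\in M$ and compute the leading coefficient of $\mathcal{Y}(v,z)m$ for a suitable $m$, showing it is a nonzero multiple of a lattice vector in $MV(p,p')$; the coefficient is governed by an $\varepsilon$--cocycle and a power of $z$ fixed by the inner product $\langle\lambda_1,\lambda_2\rangle$, and is visibly nonzero.

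The main obstacle I anticipate is precisely this last nonvanishing check, and its delicacy lies in two places. First, one must be sure that when passing from the $\mathcal{V}$--picture back to $\mathcal{W}_{p,p'}$ nothing collapses: the deformations $\Delta(e^{-\alpha/p},z)$ and $\Delta(\widetilde{H(z)},z_0)$ are formally invertible but reorganize conformal weights and produce logarithmic terms, so I would want to verify that the specific matrix coefficient I select survives all three layers of deformation rather than being a term that only appears after the $\Delta$'s act. Second, I must confirm that the restriction of $\mathcal{Y}$ to the subspace $M$ (the joint kernel of $Q$ and $\widetilde Q$ on $V_{L+\alpha/2}$) is not forced to vanish for screening-theoretic reasons; concretely, I would check that the chosen singular vector of $M$ maps under $\mathcal{Y}$ into the correct lattice summand of $MV(p,p')$ with nonzero leading term, which amounts to verifying that the relevant screening charges act compatibly and do not kill the image. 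Once a single nonzero matrix coefficient is produced at the level of $V_{\widetilde L}$, invertibility of the $\Delta$ operators propagates the nonvanishing up to $\widetilde{\mathcal Y}_1$, and the theorem follows.
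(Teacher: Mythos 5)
Your proposal follows essentially the same route as the paper: it assembles the operator by restricting the deformed intertwining operator $\widetilde{\mathcal Y}_1$ of type (\ref{int-v-1}) along the inclusion $M\hookrightarrow\mathcal{MV}$ and the embedding $\Phi$, exactly as in Section \ref{const-int-sect}. Your explicit nonvanishing argument (invertibility of the $\Delta$--operators plus the nonzero one-dimensional lattice fusion spaces) is a reasonable elaboration of what the paper leaves implicit, and is correct.
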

\begin{remark}
In the case $p'=2$, $M$ is isomorphic to the irreducible $\mathcal{W}_{p,2}$--module with lowest weight $h'= 3 p-2$ (see \cite{AdM-2011} for details).
\end{remark}

\section{$\mathcal{W}_{3,2}$-algebra}

The vertex algebra $\mathcal{W}_{3,2}$ has attracted considerable interest in the physics literature, primarily because its central charge is zero.
The main ingredients of $\mathcal{W}_{3,2}$ representation theory were worked out in \cite{AdM-IMRN}.
In \cite{GRW1} \cite{GRW2}, among many other things, M. Gaberdiel, I. Runkel and  S. Wood investigated indecomposable projective $\mathcal{W}_{3,2}$-modules by using various tools coming from tensor categories. According to \cite{GRW2}, based on known properties of projective modules on the quantum group side \cite{FGST2},  the internal structure of  projective cover $\mathcal{P}(1)$ should exhibit embedding structure as on the figure
 $$
 \xymatrix@-1pc{    & &  &  &  & \ar[d] \ar[dll] \ar[dllll] 1 \ar[drr] \ar[drrrr] &  &  & & &  \\
 & 7 \ar[dl] \ar[dr] \ar[drrrrr] & & 7 \ar[dlll] \ar[dr] \ar[drrrrr]  & & 0 \ar[dlllll] \ar[drrrrr] \ar[dr] \ar[dl] & & 5 \ar[dlllll] \ar[dlll] \ar[drrr] & & \ar[dlll] \ar[dl] \ar[dr] 5 &  \\
 1 \ar[dr]  \ar[drrr]  \ar[drrrrr] & & 2 \ar[dl] \ar[drrrrr] &  &  \ar[dr] \ar[dlll] 2 & & \ar[dlll]2 \ar[dl] \ar[dr] & & 2 \ar[dlllll] \ar[dr] & & 1\ar[dlllll] \ar[dlll] \ar[dl]  \\
 & 5 \ar[drrrr] & & 5 \ar[drr]  & & 0 \ar[d]   & & 7 \ar[dll] & & 7 \ar[dllll] & \\
 & & & & & 1 & & & &  &
}
$$
where we abbreviated $i$ for the irreducible module $\mathcal{W}(i)$ (we are using the notation
from \cite{AdM-IMRN} here).  The arrows indicate the standard way of representing embeddings and quotients (for example, the module with only incoming arrows forms the socle of $\mathcal{P}(1)$).

Now, we compare this diagram with the logarithmic module $\overline{V(3,2)}$ constructed in the previous section on the direct sum of four irreducible
$V_L$-modules $V_L \oplus V_{L-\alpha/3} \oplus V_{L+\alpha/2} \oplus V_{L+\alpha/6}$. By using classification of irreps in \cite{AdM-IMRN}, and the embedding diagrams of Feigin-Fuchs modules we easily find embedding structure for $V_{L}$ and $V_{L+\alpha/6}$
\begin{center}
$ \xymatrix@-0.5pc{     & \mathcal{W}(1) \ar[dl] \ar[d] \ar[dr] &  \\ \mathcal{W}(7) \ar[dr] & \mathcal{W}(0) \ar[d] & \mathcal{W}(5) \ar[dl] \\ & \mathcal{W}(2) & } \ \ \ $
$ \xymatrix@-0.5pc{     & \mathcal{W}(2) \ar[dl] \ar[d] \ar[dr] &  \\ \mathcal{W}(5) \ar[dr] & \mathcal{W}(0) \ar[d] & \mathcal{W}(7) \ar[dl] \\ & \mathcal{W}(1) & }$
\end{center}
The other two $V_L$-modules have embedding structure:
\begin{center}
$ \xymatrix@-0.5pc{     & \mathcal{W}(5) \ar[dl]  \ar[dr] &  \\ \mathcal{W}(1) \ar[dr] & & \mathcal{W}(2) \ar[dl] \\ & \mathcal{W}(7) & } \ \ \ $
$ \xymatrix@-0.5pc{     & \mathcal{W}(7) \ar[dl]  \ar[dr] &  \\ \mathcal{W}(2) \ar[dr] &  & \mathcal{W}(1) \ar[dl] \\ & \mathcal{W}(5) & }$
\end{center}
Now it is clear that these four diagrams, when put together, and by adding appropriate arrows coming from the deformed structure, should give the module $\mathcal{P}(1)$.
So our work is in perfect agreement with \cite{GRW2} and \cite{FGST2}.

Next, we analyze the logarithmic module $\overline{MV(3,2)}$ from Theorem \ref{main2}. It is interesting to observe that  $\overline{MV(3,2)}$ is {\em identical} to $\overline{V(3,2)}$ as  vector spaces. Yet,

\begin{proposition} $\overline{MV(3,2)} \ncong  \overline{V(3,2)}$
\end{proposition}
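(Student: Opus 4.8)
The plan is to show that the two modules $\overline{V(3,2)}$ and $\overline{MV(3,2)}$, while identical as vector spaces, differ as $\mathcal{W}_{3,2}$--modules, and the most economical way to do this is to compare an invariant that survives isomorphism but is computed differently in the two constructions. First I would record the key structural difference in the two deformations: in Theorem (the one constructing $\overline{V(p,p')}$) the operator $\overline{L(0)}$ was computed to be
$$ \overline{L(0)} = L(0) + Q + \widetilde{Q} + e^{\alpha/p' - \alpha/p}_{-1}, $$
acting on the vacuum-type space $V_L \oplus V_{L-\alpha/p} \oplus V_{L+\alpha/p'} \oplus V_{L+\alpha/p'-\alpha/p}$, whereas $\overline{MV(3,2)}$ is obtained by the same $\Delta(\widetilde{H(z)},z_0)$ deformation applied to the shifted module $MV(3,2)$ built on $V_{L+\alpha/2} \oplus V_{L+\alpha/2-\alpha/3} \oplus V_{L+\alpha/2+\alpha/2} \oplus V_{L+\alpha/2+\alpha/6}$. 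The natural distinguishing invariant is the lowest conformal weight of the semisimple part $L(0) = \overline{L(0)}_{ss}$, together with the dimension and Jordan structure of the bottom generalized eigenspace.

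The key steps, in order, are as follows. First I would identify the minimal $L(0)$--eigenvalue occurring in each module by reading off the conformal weights $h_\lambda = \tfrac{1}{4pp'}\langle \lambda,\lambda\rangle + \tfrac{p-p'}{2pp'}\langle \alpha,\lambda\rangle$ of the lowest vectors in each of the four lattice cosets, specialized to $(p,p')=(3,2)$ so that $\langle\alpha,\alpha\rangle = 12$. For $\overline{V(3,2)}$ the bottom weight is $0$ (attained by $\mathbf 1$ and $e^{\alpha/p-\alpha/p'}$, as already noted in the excerpt, giving a two-dimensional lowest space). For $\overline{MV(3,2)}$ the cosets are translated by $\alpha/2$, and the resulting minimal weight is strictly positive and in fact takes a different rational value; I would compute it directly from the four shifted cosets and confirm it is not $0$. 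Since $L(0)$ is the semisimple part of $\overline{L(0)}$ in both cases and its spectrum is an isomorphism invariant of a $\mathbb{Z}$--graded (or $\mathbb{Q}$--graded) module, a discrepancy in the minimal eigenvalue immediately forces $\overline{MV(3,2)} \ncong \overline{V(3,2)}$. If the minimal weights happened to coincide, the fallback invariant would be the dimension of the lowest generalized $L(0)$--eigenspace, or the rank of the nilpotent part $(\overline{L(0)} - L(0))$ restricted to it, both of which are preserved by module isomorphism and are computable from the explicit formulas above.

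The main obstacle I anticipate is purely bookkeeping rather than conceptual: one must be careful that $\overline{MV(3,2)}$ really is graded by $L(0)$ with the shifted spectrum claimed, since the $\Delta(\widetilde{H(z)},z_0)$ deformation changes the module structure and one should verify that the semisimple part of the deformed $\overline{L(0)}$ on $MV(3,2)$ is still the undeformed $L(0)$ (this follows by the same argument as in the preceding theorem, using $\widetilde{H(z)}_0\widetilde{L(z)}=0$). A secondary subtlety is to make sure the lattice weights are computed with the correct normalization $\langle\alpha,\alpha\rangle = 2pp' = 12$ and the correct linear term in $\omega$, so that the conformal weights in the $\alpha/2$--shifted cosets are genuinely distinct from those in $\overline{V(3,2)}$. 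Once the two bottom spectra are seen to disagree, the non-isomorphism is immediate.
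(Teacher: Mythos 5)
Your primary invariant cannot distinguish these two modules, because --- as the paper notes in the sentence immediately preceding the proposition --- $\overline{MV(3,2)}$ and $\overline{V(3,2)}$ are \emph{identical as vector spaces}. For $(p,p')=(3,2)$ the shift is by $\alpha/2$, and since $2\cdot(\alpha/2)=\alpha\in L$ one has $V_{L+\alpha/2+\alpha/2}=V_L$ and $V_{L+\alpha/2+\alpha/2-\alpha/3}=V_{L-\alpha/3}$; hence the four cosets underlying $MV(3,2)$ are exactly the four cosets underlying $V(3,2)$, merely permuted. Consequently the semisimple part $L(0)=\overline{L(0)}_{ss}$ is literally the same operator on the same graded space in both cases: the minimal conformal weight is $0$ for both (attained by ${\bf 1}\in V_L$), all graded dimensions agree, and the dimension of every generalized eigenspace agrees. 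Your claim that the minimal weight of $\overline{MV(3,2)}$ ``is strictly positive and takes a different rational value'' is therefore false, and both your primary invariant and your first fallback (dimension of the lowest generalized eigenspace) are blind to the difference. Your last fallback --- the rank of the nilpotent part of $\overline{L(0)}$ on the bottom eigenspace --- is not a priori doomed, but you neither compute it nor give any reason it should differ between the two deformations, so the argument as written does not establish non-isomorphism.

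What actually separates the two modules is the submodule structure, not the grading. The paper's proof compares socles: from the embedding diagrams of the four $V_L$-constituents and the deformed action one reads off that the socle of $\overline{V(3,2)}$ is $\mathcal{W}(1)$, whereas $\mathcal{W}(5)$ lies in the socle of $\overline{MV(3,2)}$; since the socle is an isomorphism invariant, the modules are not isomorphic. Any correct argument here must use an invariant of the $\mathcal{W}_{3,2}$-module structure finer than the generalized $L(0)$-spectrum (socle, top, or the precise placement of the Jordan blocks), precisely because the two graded vector spaces coincide.
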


The proof of the proposition follows immediately by looking at the {\em socle} of the module in question. For instance $\mathcal{W}(5)$ belongs in the socle
of $\overline{MV(3,2)}$, but not in the socle of $\overline{V(3,2)}$. Again, if we put four diagrams above diagrams to form $\overline{MV(3,2)}$, then - modulo some
arrows -  the internal structure of a (hypothetical) projective cover $\mathcal{W}(5)$ from \cite{GRW2}:
 $$
 \xymatrix@-1pc{    & &  &  &  &  \ar[dll] \ar[dllll] 5 \ar[drr] \ar[drrrr] &  &  & &  \\
 & 2 \ar[d] \ar[dl] \ar[drr] \ar[drrrrr] & & 2 \ar[dlll] \ar[drrr] \ar[drrrr] \ar[drrrrrr]  & &  & & 1 \ar[dllllll] \ar[dlll] \ar[drrr] \ar[d]  & & \ar[dlll] \ar[dllllll]  \ar[d] 1 \ar[dr] &  \\
 5 \ar[dr]  \ar[drrr]  & 7 \ar[d] \ar[drrrrrr] &  & 7 \ar[d] \ar[drrrr] &  \ar[dlll]  \ar[drrr] 0  &   & 0 \ar[dlll] \ar[drrr]  & 7 \ar[dllllll] \ar[drr] & & 7 \ar[d] \ar[dllllll] &  5 \ar[dlll]  \ar[dl]  \\
 & 1 \ar[drrrr] & & 1 \ar[drr]  & &  & & 2 \ar[dll] &  &  2 \ar[dllll]  & \\
 & & & & & 5 & & & &
}
$$
where it  was denoted by $\mathcal{P}(5)$.

\begin{remark}
{\em It is very likely that  a combination of methods from \cite{NT}, \cite{AdM-2012} based on "powers" of screening operators  can be used to construct other two projective modules $\mathcal{P}(2)$ and $\mathcal{P}(7)$ on the {\em same} direct product of four irreducible $V_L$-modules. The remaining projective cover in the principal block
, namely $\mathcal{P}(0)$, seems to be the hardest to construct explicitly (although its structure was conjectured in \cite{GRW2}). 
We do not pursue this direction in the present paper.}
\end{remark}

From everything being said we end with a hypothesis.
\begin{conjecture}  Modules $\overline{V(p,p')}$ and $\overline{MV(p,p')}$ above are projective covers of particular irreducible $\mathcal{W}_{p,p'}$-modules of lowest conformal weight $1$ and $\frac{(p+2)(p'+2) }{4}$, respectively.
\end{conjecture}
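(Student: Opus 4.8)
The plan is to establish the conjecture in two stages: first to pin down the \emph{top} and the full Loewy structure of each module, and then to upgrade ``indecomposable with simple top and the correct composition series'' to ``projective cover'' using the general theory of the (conjecturally $C_2$-cofinite) category of $\mathcal{W}_{p,p'}$-modules. First I would identify the cyclic generators from the explicit action of $\overline{L(0)}$ computed above: the single rank-$3$ Jordan block of $\overline{L(0)}$ selects a distinguished vector (the top of that block, namely $\alpha(-1){\bf 1}$ in the case of $\overline{V(p,p')}$, and the subsingular vector $w$ in the case of $\overline{MV(p,p')}$) whose $\mathcal{W}_{p,p'}$-submodule surjects onto an irreducible of lowest weight $1$ (respectively $\frac{(p+2)(p'+2)}{4}$). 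I would then show that this irreducible is precisely the quotient of the module by its radical. Indecomposability with \emph{simple} top should follow because a single maximal Jordan block of $\overline{L(0)}$ forces the radical to be the unique maximal proper submodule.

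The second step is to determine the socle series explicitly and match it against the quantum-group prediction. As already indicated for $(3,2)$, the decomposition of $\overline{V(p,p')}$ (resp. $\overline{MV(p,p')}$) into four irreducible $V_L$-modules, together with the Feigin--Fuchs embedding diagrams, gives all composition factors, while the ``deformation arrows'' coming from $\overline{L(0)}=L(0)+Q+\widetilde{Q}+e^{\alpha/p'-\alpha/p}_{-1}$ supply the remaining extensions that glue the four diagrams into a single connected one. Comparing the resulting Loewy diagram with the embedding structure of $\mathcal{P}(1)$ (resp. $\mathcal{P}(5)$) predicted in \cite{GRW2}, \cite{FGST2}, one should obtain agreement at the level of composition factors and of the extensions between them; the socle computation used to separate $\overline{MV(3,2)}$ from $\overline{V(3,2)}$ is exactly the kind of bookkeeping needed here.

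To conclude projectivity I would invoke uniqueness of projective covers: in a category with enough projectives, an indecomposable module with simple top $S$ equals $P(S)$ as soon as its class in the Grothendieck group equals $[P(S)]$, i.e. its composition-factor multiplicities match the relevant column of the Cartan matrix of $\mathcal{W}_{p,p'}$ (equivalently of the quantum group $\mathfrak{g}_{p,p'}$). Thus it would suffice to verify one numerical identity, namely that the graded dimension of $\overline{V(p,p')}$ (resp. $\overline{MV(p,p')}$) coincides with the conjectural character of the projective cover, which can be read off from the four lattice characters. Alternatively, one could try to realize the module as a direct summand of a manifestly projective object (for instance a summand of $V_L$ regarded as a $\mathcal{W}_{p,p'}$-module, or of a suitable induced module), which would yield projectivity without the character computation.

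The main obstacle is that all of this currently rests on structural facts that are themselves open for general $p'$: $C_2$-cofiniteness of $\mathcal{W}_{p,p'}$, the classification of irreducibles, and the precise block and Cartan data. Without these one cannot even assert that projective covers exist, so the cleanest unconditional result is confined to cases where this background is available, notably $\mathcal{W}_{3,2}$, where the classification of \cite{AdM-IMRN} and the explicit diagrams above allow one to carry out the composition-series comparison with $\mathcal{P}(1)$ and $\mathcal{P}(5)$ in full. For general $(p,p')$ I expect the statement to remain conditional on these conjectures, with the hardest genuinely new input being a rigorous proof of projectivity (the lifting property) rather than the Loewy-structure bookkeeping.
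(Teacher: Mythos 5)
The statement you are addressing is stated in the paper as a \emph{conjecture}, and the paper offers no proof of it; indeed the authors say explicitly, immediately after the statement, that even in the $c=0$ case they cannot prove projectivity of either module because they lack information about the other indecomposable modules in the block and the relevant ${\rm Ext}$-groups. Your proposal is therefore best read as a roadmap rather than a proof, and to your credit you identify essentially the same obstruction the authors do. But two points in your write-up overstate what can actually be carried out. First, your reduction ``indecomposable with simple top $S$ and class $[P(S)]$ in the Grothendieck group implies $\cong P(S)$'' is sound as abstract algebra, but every input it needs is currently unavailable: existence of projective covers requires $C_2$-cofiniteness of $\mathcal{W}_{p,p'}$ (known only for $p'=2$), the class $[P(S)]$ requires the Cartan matrix of the block, which is known only conjecturally via the (unproven) Kazhdan--Lusztig-type correspondence with $\goth{g}_{p,p'}$, and even the classification of irreducibles is open for general $(p,p')$. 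Second, your claim that the argument can be completed ``in full'' for $\mathcal{W}_{3,2}$ contradicts the paper's own assessment: matching composition factors and visible extensions against the diagrams of \cite{GRW2} shows consistency, not projectivity, since one must still verify the lifting property against \emph{all} indecomposables in the block, and those are not classified. The alternative route you suggest (exhibiting the module as a summand of a manifestly projective object) also begs the question, since no object of the category is currently known to be projective.

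There is also a local gap in your first step: a single maximal-size Jordan block of $\overline{L(0)}$ does not by itself force the module to be cyclic with simple top. You would need to show separately that the vector at the top of the rank-$3$ block (e.g.\ $\alpha(-1){\bf 1}$ for $\overline{V(p,p')}$) generates the whole module under $\mathcal{W}_{p,p'}$, which requires detailed knowledge of the action of the generating fields of $\mathcal{W}_{p,p'}$ on all four lattice summands --- information the paper only establishes at the level of $L(0)$ and the low-weight subspaces. In short, your plan is a reasonable strategy for a future proof conditional on the standing conjectures, but it does not close the gap, and the statement should remain a conjecture.
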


Even in the $c=0$ case we cannot prove projectivity of either of the modules because we lack information about other indecomposable modules in the
block and the related ${\rm Ext}$-groups.

\begin{remark}
In Appendix A1 of \cite{GRW2}, the authors studied  intertwiners  between projective $\mathcal{W}(2,3)$--modules $\mathcal{P}(1)$ and $\mathcal{P}(5)$.
In the language of logarithmic tensor product \cite{HLZ},  relation between these modules should be interpreted as
\bea && \mathcal{P}(5) = \mathcal{P}(1) \widehat{ \otimes } \mathcal{W}(7). \label{tensor-7} \eea

In Section \ref{const-int-sect}
 we have constructed
an intertwining operator of type
$${ \mathcal{P}(5) \choose \mathcal{W}(7) \ \  \mathcal{P}(1) }.$$
The existence of this intertwining operator is in agreement with (\ref{tensor-7}).

 We also
get some intertwining operators of the form
$$ { \mathcal{P}(1)  \choose \mathcal{W}(h) \ \  \mathcal{P}(1) }  \qquad  \mbox{and} \qquad{ \mathcal{P}(5)  \choose \mathcal{W}(h) \ \  \mathcal{P}(5) }. $$

Existence of such intertwining operators was predicted by the fusion rules analysis from  in \cite{GRW1},  \cite{GRW2}, \cite{W}.
\end{remark}

\section{Generalized twisted modules and logarithmic intertwining operators}

Let us finish with a  brief comment on logarithmic intertwining operators among $\mathcal{W}_{p,p'}$-modules and
generalized twisted modules associated to automorphisms of infinite order. In \cite{AdM-2009}, Theorem 9.1, we already constructed
examples of logarithmic intertwining operators \cite{AdM-2007}, \cite{HLZ} among triples of logarithmic $\mathcal{W}_{p,p'}$ modules, involving
at most linear logarithmic factor ${\rm log}(z)$. Shortly after, Huang in \cite{H}, Theorem 5.8, connected our construction with his notion of generalized twisted modules, and based on \cite{AdM-2007} provided examples coming from automorphisms $e^{2 \pi i Q}$ and $e^{2 \pi i \tilde{Q}}$.

Results from the previous section raise the issue of construction of more general
intertwining operators and related generalized twisted $e^{2 \pi i Q}$ and $e^{2 \pi i \tilde{Q}}$-modules,
 which in addition to linear logarithmic terms also involve the quadratic term ${\rm log}^2(z)$. This of course is supposed to capture the ubiquitous
 rank $3$ nilpotency in $\mathcal{W}_{p,p'}$-Mod. We should say that for the triplet vertex algebra $\mathcal{W}_p$, due to rank $2$ nilpotency, linear logarithmic intertwiners are sufficient.

Because our approach relies heavily on local systems, we propose to study a certain logarithmic extension of $\mathcal{V}$ denoted by $\mathcal{V}_{log}$
which can be viewed as a logarithmic $\mathcal{V}$-module in the weakest sense.  This kind of logarithmic extension will be further deformed
 via $\Delta(\widetilde{H(z)},z_1)$ to construct desired intertwining operators and generalized twisted modules.
This and similar constructions will be pursued in the forthcoming publication.


\end{document}